\numberwithin{equation}{section}
\newcommand{\pv}{\operatorname{p.v.}}
\newcommand{\R}{\mathbb{R}}
\newcommand{\T}{\mathbb{T}}
\newcommand{\C}{\mathbb{C}}
\newcommand{\Z}{\mathbb{Z}}
\newcommand{\N}{\mathbb{N}}
\newcommand{\bW}{\boldsymbol{W}}
\newcommand{\bL}{\boldsymbol{L}}
\newcommand{\bB}{\boldsymbol{B}}
\newcommand{\bI}{\boldsymbol{I}}
\newcommand{\bS}{\boldsymbol{S}}
\newcommand{\bP}{\boldsymbol{P}}
\newcommand{\bR}{\boldsymbol{R}}
\newcommand{\cH}{\mathcal{H}}
\newcommand{\cF}{\mathcal{F}}
\newcommand{\cD}{\mathcal{D}}
\newcommand{\cP}{\mathcal{P}}
\newcommand{\cR}{\mathcal{R}}
\newcommand{\bbH}{\mathbb{H}}
\def\lbl{\label}
\def\be{\begin{equation}}
\def\ee{\end{equation}}
\newcommand{\p}{{\partial}}
\def\iu{{\mathrm{i}\mkern1mu}}
\def\1{\mathbf{1}}
\newtheorem{theorem}{Theorem}[section]
\newtheorem{lemma}[theorem]{Lemma}
\newtheorem{remark}[theorem]{Remark}
\newtheorem{definition}[theorem]{Definition}
\newtheorem{assumption}[theorem]{Assumption}
\title{Stability and bifurcation of mixing in the Kuramoto model with inertia}
\date{\today}
\author{Hayato Chiba\thanks{Advanced Institute for Materials Research,
    Tohoku  University, Sendai, 980-8557, Japan, {\tt hchiba@tohoku.ac.jp}}\;
  \quad and\quad
Georgi S. Medvedev\thanks{Department of Mathematics, 
Drexel University, 3141 Chestnut Street, Philadelphia, PA 19104,
{\tt medvedev@drexel.edu}} 
}
\begin{document}
\maketitle
\begin{abstract}
  The Kuramoto model of coupled second order damped oscillators 
  on convergent sequences of graphs is analyzed in this work.
  The oscillators in this model have random intrinsic frequencies and
  interact with each other via  nonlinear coupling. The connectivity
  of the coupled system is assigned by a graph which may be random as
  well.
  In the thermodynamic limit the behavior
  of the system is captured by the Vlasov equation, a hyperbolic
  partial differential equation for the
  probability distribution of the oscillators in the phase space.
  We study stability of mixing, a steady state solution of the Vlasov equation, corresponding
  to the uniform distribution of phases. Specifically, we identify a critical value of the
  strength of coupling, at which the system undergoes a pitchfork
  bifurcation. It corresponds to the loss of stability of mixing and marks the
  onset of synchronization. As for the classical Kuramoto model,
  the presence of the continuous spectrum on the imaginary axis poses the main difficulty for the
  stability analysis. To overcome this problem, we use the methods from the generalized spectral
  theory developed for the original Kuramoto model. The analytical results are
  illustrated with numerical bifurcation diagrams computed for the Kuramoto model
  on Erd{\H o}s--R{\' e}nyi and small-world graphs.
  
  Applications of the second-order Kuramoto model include power networks, coupled pendula, and
  various biological networks. The analysis in this
  paper provides a mathematical description of the onset of synchronization in these systems.
  \end{abstract}
  
  
\section{Introduction}
\setcounter{equation}{0}

In this paper, we study the following system of coupled second order damped oscillators on a
convergent sequence of
graphs $\{\Gamma_n\}$:
\begin{equation}\label{2nd-order}
  \ddot\theta _i+ \gamma  \dot\theta _i= \omega _i +
  \frac{2K}{n} \sum^n_{j=1}a^n_{ij} \sin \left(\theta _j-\theta _i\right),
\quad i\in [n]=\{1,2,\dots,n\},
\end{equation}
where $\theta _i, i\in [n],$ denotes the phase of the $i$th oscillator,
$\gamma >0$ is a damping constant, $K$ is the coupling strength and
$a^n_{ij}$ is the adjacency matrix of $\Gamma_n$. Intrinsic frequencies $\omega_i, i\in [n],$
are independent identically distributed  random variables drawn from the probability
       distribution with density $g$.
By rescaling time, intrinsic frequencies, and $K$, one can make $\gamma
=1$, which will be assumed without loss of generality throughout this paper.

System of ordinary differential equations \eqref{2nd-order} may be viewed as an extension of the
classical Kuramoto model (KM), which is used to study collective behavior in large coupled systems \cite{Kur84}.
In this context, it is called the KM with inertia, because the second order terms in \eqref{2nd-order}
introduce inertial effects into the system's dynamics.
On the other hand, system \eqref{2nd-order} has a clear mechanical interpretation. For instance,
it may be viewed as a system of coupled pendula or coupled power generators, as
it is perhaps best known for
\cite{SMV84,TOS19}. Like its classical counterpart, the KM with inertia
features transition to synchronization. For small positive values of the coupling strength, the 
system exhibits irregular  dynamics, called mixing. Upon increasing the coupling strength
$K$, mixing loses stability and a gradual (soft) transition to synchronization takes place.
The onset of synchronization in the classical KM with all-to--all coupling was analyzed in \cite{Chi15a, Die16}
and in the model on graphs in \cite{ChiMed19a, ChiMed19b}.
The analysis  of synchronization in the KM with inertia  involves new technical challenges 
related to the dimensionality of the phase space, which make it an interesting stability
problem. In addition, synchronization in the KM with inertia has important applications.
First, 
synchronization
provides a more efficient regime of operation of power networks. Therefore, it is important to know
what factors determine stability of synchrony in the second order model on graphs.
Previous studies of the onset of synchronization in the  KM with inertia \cite{TLO97, TSL97}
relied on asymptotic analysis and numerical simulations. In this work, for the first time we perform
a rigorous linear stability analysis of mixing and identify the instability of mixing.
Furthermore, we perform a formal center manifold
reduction
and identify the bifurcation, marking the onset
of synchronization, as a pitchfork bifurcation. We verified these results  numerically for the KM on
various graphs including Erd\H{o}s-R{\' e}nyi (ER) and small-world (SW) graphs.

To proceed, we rewrite \eqref{2nd-order} as a system of coupled second order ordinary
differential equations and add a separate equation for $\omega_i$:
\be\lbl{rewrite-2nd}
\begin{split}
\dot\theta _i &  = \psi_i + \omega _i, \\
\dot\psi_i  &= -\psi_i + \frac{2K}{n} \sum^n_{j=1}a^n_{ij} \sin (\theta _j-\theta _i),\\
\dot\omega_i& =0,\qquad i\in [n].
\end{split}
\ee

To study  the dynamics of \eqref{rewrite-2nd} for large $n$, we employ the following Vlasov equation
\begin{equation}\label{Vlasov}
\partial_t f + \partial_\theta \left( (\psi + \omega ) f \right)
+\partial_\psi \left( (-\psi + \mathbf{N}[f])f \right) = 0,
\quad f=f(t,\theta,\psi,\omega,x),
\end{equation}
where
\begin{equation*}
\displaystyle \mathbf{N}[f](t, \theta ,x) = \frac{K}{\iu}\left( e^{-\iu\theta } h(t,x) - e^{\iu\theta } \overline{h(t,x)} \right). 
\end{equation*}
Here and below, we use $\iu =\sqrt{-1}$ to denote the imaginary unit.  
Further,
$f(t,\theta,\psi,\omega,x) d\theta d\psi d\omega$ is the probability that 
the state of the oscillator at point $x\in [0,1]$ at time $t\in \R^+$ is
in $(\theta, \theta+d\theta)\times (\psi, \psi+d\psi)\times (\omega, \omega+ d\omega)$.
The justification of the Vlasov equation as a mean field limit  for the original KM
on graphs can be
found in \cite{KVMed18, ChiMed19a}. It extends verbatim to cover the model at hand.
  
The following local order parameter plays a key role in the analysis of synchronization in the KM model:
\begin{equation}\lbl{def-order}
h(t,x) = \int_{\T\times\R^2 \times I} W(x,y) e^{\iu \theta } f(t,\theta ,\psi, \omega ,y) d\theta d\psi d\omega dy,
\end{equation}
where $I=[0,1]$ and $\T=\R/2\pi\Z$. $W$ is a square integrable function on $I^2$, which describes the limit of the
graph sequence $\{\Gamma_n\}$. $W$ is called a graphon. For the details on graph limits and their
applications to the continuum description of the dynamical networks, we refer the interested
reader to \cite{ChiMed19a, Med19}.

A weak (distribution--valued) solution $f(t,\theta,\psi,\omega,x)$ of the initial value problem for the Vlasov equation \eqref{Vlasov}
yields the probability distribution of particles in the phase space $\T\times \R^2$ for each
$(t,x)\in \R^+\times I$ (cf. \cite{Dob79, KVMed18}).  By mixing we call the following
steady state solution of \eqref{Vlasov}
\be\lbl{mixing}
f_{mix}=\frac{g(\omega)}{2\pi}\delta(\psi),
\ee
where $\delta$ stands for Dirac's delta function
and $g(\omega)$ is the probability density function characterizing the distribution
of intrinsic frequencies. $f_{mix}$ corresponds to the stationary regime
when the values of $\theta_i$ are distributed uniformly over $\T$. Stability of mixing is the main theme
of this paper.

In the next section, we recast the stability problem in the Fourier space and derive the linearized problem.
The next step is to characterize the spectrum of the linearized operator. This is done in
Section~\ref{sec.spectral}. We start by describing the setting for the spectral problem. To this end,
we define function spaces and operators involved in the linear stability problem.  After that we derive
a transcendental equation for the eigenvalues of the linearized problem.  The analysis of the
eigenvalue problem shows that the  bifurcating eigenvalue fails to cross the imaginary
axis filled by the residual spectrum. To overcome this difficulty, following the analysis of the
classical KM \cite{Chi15a, ChiMed19b},
we use the weak formulation for the eigenvalue problem for the linearized operator
based on a carefully selected Gelfand triplet and the analytic continuation of the
resolvent operator past the imaginary axis. In this setting the generalized
eigenvalues are defined as singular points of the generalized
resolvent. The corresponding eigenfunction is used in the center manifold reduction of the
system's dynamics. This concludes a rigorous linear stability analysis
  of mixing and sets the scene for the bifurcation analysis.
  In Section~\ref{sec.stability}, we prove that mixing is asymptotically
  stable in the subcritical regime.  In Section~\ref{sec.bifurcation}, we show that mixing undergoes a
  pitchfork bifurcation marking the onset of synchronization. The center manifold reduction
  is done under the assumption of the existence of the center manifold.  However, the formal center manifold
  reduction alone is a formidable problem, as the analysis in this section shows.
  The proof of existence of the center manifold requires new additional techniques and 
  is beyond the scope of this paper. The analytical results are
  illustrated with numerical examples of the bifurcation in the KM on ER and SW 
  graphs in Section~\ref{sec.numerics}.


\section{The Fourier transform}\lbl{sec.Fourier}
\setcounter{equation}{0}
As a probability density function, $g$ is a nonnegative integrable function
such that
\be\lbl{int-g}
\int_\R g(s)ds =1.
\ee
Since $\omega$ does not change in time, $f$ satisfies the following constraint
\begin{eqnarray}\label{g-constraint}
g(\omega ) = \int_{\T \times \R}\!  f(t,\theta ,\psi, \omega ,x)
  d\theta d\psi, \quad \forall (t,x)\in \R^+\times I.
\end{eqnarray}
In addition, throughout this paper, we will assume the following.
\begin{assumption}\label{as.decay} Let $g$ be a real analytic function. In addition, let
  the Fourier transform $\hat{g}(\eta) :=\int_\R e^{\iu \eta\omega}g(\omega) d\omega$ be continuous on $\R$ and
  \be\lbl{g-decay}
  \lim_{\eta\to\infty} |\hat{g}(\eta)|e^{a\eta}=0
  \ee
  for some $0<a<1$.
  \end{assumption}
  \begin{remark}
By the Paley-Wiener theorem, \eqref{g-decay} implies that    
$g(\omega )$ has an analytic continuation to the region $0<\mathrm{Im}(z) < a$. 
\end{remark}

In preparation for the stability analysis of mixing, we rewrite \eqref{Vlasov} using Fourier variables:
\begin{equation}\label{transform}
u_j(t, \zeta, \eta , x) = \int_{\R^2\times \T} e^{\iu (j\theta + \zeta \psi + \eta \omega  )} 
   f (t,\theta ,\psi, \omega ,x) d\theta d\psi d\omega,\quad j\in\Z.
 \end{equation}

 The Vlasov PDE is then rewritten as a system of differential equations for the Fourier coefficients
 $u_j, \, j\in\Z$:
\begin{equation}\label{Fourier}
  \begin{split}
\p_tu_j &= 
-\int e^{\iu (j\theta + \zeta \psi + \eta \omega  )}\frac{\partial }{\partial \theta } 
\left\{ (\psi + \omega ) f \right\} d\theta d\psi d\omega  \\
&  - \int e^{\iu (j\theta + \zeta \psi + \eta \omega  ) }
          \frac{\partial }{\partial \psi}
          \left\{(-\psi + \mathbf{N}[f]) f \right\} d\theta d\psi d\omega  \\
&= \iu j \int e^{\iu (j\theta + \zeta \psi + \eta \omega  ) } (\psi + \omega ) f d\theta d\psi d\omega \\
&  \quad +\iu \zeta \int e^{\iu (j\theta + \zeta \psi + \eta \omega  ) }
         \left( -\psi + \frac{K}{\iu} (e^{-\iu \theta } h - e^{\iu \theta } \overline{h}) \right) fd\theta d\psi d\omega \\
&= (j-\zeta) \frac{\partial u_j}{\partial \zeta} + j \frac{\partial u_j}{\partial \eta}
     + K\zeta (h(t,x) u_{j-1} - \overline{h(t,x)} u_{j+1}),\; j\in\Z,
\end{split}
   \end{equation}
where
\begin{equation}
h(t,x) = \int_I W(x,y) u_1(t,0,0 ,y) dy.
\end{equation}
is the local order parameter.
In addition, we have the following constraints
\begin{eqnarray}\label{F-const-a}
  u_0(t,0,0 ,x) &=& 1,\\
  \label{F-const-b}
  u_{-j}(t,-\zeta, -\eta ,x) & =& \overline{u_j (t,\zeta,\omega ,x)}, \; j\in \N.
\end{eqnarray}
Here,  \eqref{F-const-a} follows from \eqref{g-constraint} and 
\eqref{F-const-b} follows from the fact that $f$ is real. Thus,
it is sufficient to restrict to $j\in \N\bigcup\{0\}$ in \eqref{Fourier}.

By changing from $\zeta$ to $\xi$ given by the following relations:
\begin{equation}
\left\{ \begin{array}{ll}
\zeta - j = - e^{-\xi_j}, & \zeta -j<0, \\
\zeta - j =   e^{-\xi_j}, & \zeta -j\geq 0, \\
\end{array} \right.
\label{1-6}
\end{equation}
and setting
\begin{equation}
v_j (t, \xi_j, \eta ,x) := 
\left\{ \begin{array}{ll}
u_j(t, j- e^{-\xi_j }, \eta , x), & \zeta -j<0, \\[0.2cm]
u_j(t, j+ e^{-\xi_j }, \eta , x), & \zeta -j\geq 0, \\
\end{array} \right.
\end{equation}
we obtain
\begin{equation*}
  \begin{split}
\partial_t v_j
  &=\partial_{\xi_j} v_j 
    +j \partial_\eta v_j+K(j-e^{-\xi_j}) \left( h(t,x)v_{j-1}-\overline{h(t,x)} v_{j+1}\right), \\
\displaystyle h(t,x)& = \int_I  W(x,y) v_1(t,0,0,y)dy, 
                         \end{split}                       
\end{equation*}
subject to the constraint $v_0(t, \infty , \eta ,x) = \hat{g}(\eta)$.
For $j\geq 0$, we adopt the first line of (\ref{1-6}) because in the definition of the local order parameter, we need $u_1(t,0,0,x)$, for which $\zeta -j = -1<0$.
By the same reason, we use the second line for $j \leq -1$.

The steady state of the Vlasov equation, $f_{mix}$,  in the Fourier space has the following form
\be\lbl{Fmix}
v_0 = \hat{g}(\eta), \quad v_j = 0, j\in\N.
\ee

To investigate the stability of \eqref{Fmix}, let $w_0 = v_0 - \hat{g}(\eta)$ and $w_j = v_j$ for $j\neq 0$.
Then we obtain the system
\begin{equation}
\left\{ \begin{array}{l}
\displaystyle \partial_t w_1= \partial_{\xi_1} w_1  + \partial_\eta w_1
     + K(1- e^{-\xi_1}) \left( h(t,x) \hat{g}(\eta) + h(t,x)w_0 - \overline{h(t,x)} w_2 \right), \\[0.4cm]
\displaystyle \frac{dw_j}{dt} = \frac{\partial w_j }{\partial \xi_j} + j\frac{\partial w_j}{\partial \eta}
          + K(j- e^{-\xi_j}) \left( h(t,x)w_{j-1} - \overline{h(t,x)} w_{j+1} \right), 
          \; j\ge 0\;\mbox{and}\; j\neq 1.\\
          [0.4cm]
\displaystyle h(t,x) = \int_I W(x,y) w_1(t,0 ,0 ,y) dy,
\end{array} \right.
\label{1-8}
\end{equation}
and $w_0(t, \infty , \eta ,x) = 0$. 
Our goal is to investigate the stability and bifurcations of the steady state (mixing)
$w_j = 0, \, j\in \Z$ of this system.

The linearized system has the following form
\begin{eqnarray}\label{linear}
\frac{dw_1}{dt} & = & \bL_1[w_1] + K \bB[w_1] =: \bS[w_1]  \\
\frac{dw_j}{dt} &=& \bL_j [w_j],\quad j\ge 0\;\mbox{and}\; j\neq 1, \label{linear-2}
\end{eqnarray}
where
\begin{eqnarray} \lbl{Lj}
 \bL_j[\phi](\xi, \eta , x) 
  &=& \left( {\partial_\xi} + j {\partial_\eta}\right) \phi(\xi, \eta, x), \; j\in \Z\\
  \label{def-B}
\bB[\phi](\xi, \eta , x)&=&(1-e^{-\xi}) \hat{g}(\eta) \bW[\phi (0,0, \cdot)](x),
\end{eqnarray}
and
\begin{equation}\lbl{def-W}
\bW[f](x) = \int_{\R} W(x,y) f(y) dy.
\end{equation}


\section{The spectral analysis}\lbl{sec.spectral}
\setcounter{equation}{0}
\subsection{The setting}

To define the operators used in the linearized system \eqref{linear} and \eqref{linear-2}
formally, we introduce the following Banach spaces.
For $\alpha\in [0, 1)$, let
\begin{equation}\label{weights}
  \beta^+_1(\eta) = \max\{ 1, e^{\alpha\eta}\}, \quad \beta^-_1(\eta) = \min\{ 1, e^{\alpha\eta}\},
  \quad\mbox{and}\quad \beta_2(\xi) = \min\{ e^{\xi}, 1 \},
  \end{equation}
and define
\begin{equation}\label{spaces}
\begin{split}
  \mathcal{X}^\pm_\alpha  & = \{ \phi : \text{continuous on $\R$},\, 
       \| \phi \|_{\mathcal{X}^\pm_\alpha} := \sup_{\eta} \beta^\pm_1(\eta) |\phi (\eta)| < \infty \}, \\
\mathcal{Y}^\pm_\alpha  &= \{ \phi : \text{continuous on $\R^2$},\, 
       \| \phi \|_{\mathcal{Y}^\pm_\alpha} := \sup_{\xi, \eta} \beta^\pm_1(\eta)\beta_2(\xi) |\phi (\xi, \eta)| < \infty \}, \\
       \mathcal{H}^\pm_\alpha  &= L^2 (I; \mathcal{Y}^\pm_\alpha).
       \end{split}
\end{equation}
The norms on $\mathcal{H}^{\pm}_\alpha$ are defined by
\begin{equation}\label{norm-Ha}
  \| \phi \|^2_{\mathcal{H}^{\pm}_\alpha}=\int_{I}\! \left( \sup_{\xi, \eta}  \beta^{\pm}_1(\eta)
    \beta_2(\xi) |\phi (\xi, \eta, x)|\right)^2 dx.
\end{equation}
They form a  Gelfand triplet
$$
\mathcal{H}^+_\alpha  \subset \mathcal{H}^+_0 = \mathcal{H}^-_0 \subset \mathcal{H}^-_\alpha,
$$
which will be used in Section~\ref{sec.resolvent}.
Below, the function spaces in \eqref{spaces} are used for $\alpha\in\{0, a\}$, where $a$ is the same as in \eqref{g-decay}.
Recall that $\hat g\in\mathcal{X}^+_a$ for certain $0<a<1$ (cf.~Assumption~\ref{as.decay}).

Now we can formally discuss the operators involved in the linearized problem \eqref{linear},
\eqref{linear-2}. First, we note that $\bW$  defined in \eqref{def-W}
can be viewed as the Fredholm integral operator on $L^2(I)$.
It is a compact self-adjoint operator. Therefore, the eigenvalues of $\bW$ are real with the
only accumulation point at $0$. We denote the set of eigenvalues of $\bW$ by
$\sigma _p(\bW)$.

 Next, we turn to operators $\bL_j$ and $\bB$.
 It follows that they are densely defined on $\mathcal{H}^+_0$. 
In addition, $\bB$ is a bounded operator as shown in the following lemma.
\begin{lemma}
  $\bB$ is a bounded operator on $\mathcal{H}^+_0$.
\end{lemma}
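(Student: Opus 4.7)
The plan is to compute $\|\bB[\phi]\|_{\mathcal{H}^+_0}$ directly from the definition and factor it into three harmless pieces: a scalar sup in $(\xi,\eta)$, the action of $\bW$ on $L^2(I)$, and a pointwise evaluation of $\phi$ at $(\xi,\eta)=(0,0)$.

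First I would use the explicit form of $\bB$ from \eqref{def-B}, namely
\begin{equation*}
\bB[\phi](\xi,\eta,x) = (1-e^{-\xi})\,\hat g(\eta)\, \bW[\phi(0,0,\cdot)](x),
\end{equation*}
to observe that the $x$-dependence and the $(\xi,\eta)$-dependence separate. With $\alpha=0$ we have $\beta^+_1\equiv 1$, so the norm in \eqref{norm-Ha} becomes
\begin{equation*}
\|\bB[\phi]\|^2_{\mathcal{H}^+_0} = \Big(\sup_{\xi,\eta}\beta_2(\xi)|1-e^{-\xi}|\,|\hat g(\eta)|\Big)^2 \cdot \int_I |\bW[\phi(0,0,\cdot)](x)|^2\,dx.
\end{equation*}

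Next I would bound the scalar factor. By \eqref{int-g}, $|\hat g(\eta)|\le 1$ for all $\eta$. For the $\xi$-factor, I split cases: when $\xi\ge 0$, $\beta_2(\xi)=1$ and $|1-e^{-\xi}|\le 1$; when $\xi<0$, $\beta_2(\xi)=e^{\xi}$ and $|1-e^{-\xi}|=e^{-\xi}-1$, so their product equals $1-e^{\xi}\le 1$. Hence the sup is bounded by $1$.

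For the $L^2$-integral factor, $\bW$ is a bounded (in fact compact) operator on $L^2(I)$ with $\|\bW\|\le \|W\|_{L^2(I^2)}$, so
\begin{equation*}
\|\bW[\phi(0,0,\cdot)]\|_{L^2(I)} \le \|W\|_{L^2(I^2)}\, \|\phi(0,0,\cdot)\|_{L^2(I)}.
\end{equation*}
Finally, since $\beta^+_1(0)\beta_2(0)=1$, pointwise evaluation at $(\xi,\eta)=(0,0)$ is controlled by the $\mathcal{Y}^+_0$-norm: $|\phi(0,0,x)|\le \|\phi(\cdot,\cdot,x)\|_{\mathcal{Y}^+_0}$, and integrating the square over $I$ gives $\|\phi(0,0,\cdot)\|_{L^2(I)}\le \|\phi\|_{\mathcal{H}^+_0}$. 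Combining the three estimates yields $\|\bB[\phi]\|_{\mathcal{H}^+_0}\le \|W\|_{L^2(I^2)}\|\phi\|_{\mathcal{H}^+_0}$, which proves the lemma.

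There is no real obstacle here; the whole argument rests on the convenient fact that the weights $\beta^+_1,\beta_2$ are normalized to $1$ at the point $(0,0)$ where $\bB$ samples $\phi$, and that the growth factor $(1-e^{-\xi})$ is exactly tamed by the weight $\beta_2(\xi)=\min\{e^\xi,1\}$ in both tails.
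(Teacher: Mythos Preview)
Your proof is correct and follows essentially the same route as the paper: separate the $(\xi,\eta)$ and $x$ variables, use $\beta_2(0)=1$ to control the pointwise evaluation $\phi(0,0,\cdot)$ by the $\mathcal{H}^+_0$-norm, bound $\bW$ on $L^2(I)$ via Cauchy--Schwarz, and check directly that $\beta_2(\xi)|1-e^{-\xi}|\le 1$ in each regime. The only cosmetic difference is that you invoke $|\hat g|\le 1$ from \eqref{int-g} to get the clean constant $\|W\|_{L^2(I^2)}$, whereas the paper leaves $\sup_\eta|\hat g(\eta)|$ as a factor in the bound.
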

\begin{proof}
When $\alpha =0$, $\beta ^{\pm}_1 (\eta) = 1$. 
By the Cauchy--Schwarz inequality, we have
\begin{eqnarray*}
|| \bB[\phi] ||^2_{\mathcal{H}^+_0}
&=& \int_{I} \left( \sup_{\xi, \eta} \left\{ \beta_2(\xi)(1-e^{-\xi})|\hat{g}(\eta)|\right\}  
\Bigl| \int_I W(x,y)\phi (0,0,y)dy  \Bigr| \right)^2 dx \\
& \leq & \int_{I^2} |W(x,y)|^2dxdy\cdot \sup_{\eta}\left\{|\hat{g}(\eta)|^2\right\}
\sup_{\xi, \eta} \left\{\beta_2(\xi)^2 (1-e^{-\xi})^2 \right\}\int_I |\phi (0,0,x)|^2 dx \\
&=& || \bW ||^2_{L^2} \cdot \sup_{\eta}\left\{|\hat{g}(\eta)|^2\right\}
\sup_{\xi, \eta} \left\{\beta_2(\xi)^2 (1-e^{-\xi})^2 \right\}\int_I (\beta_2 (0)|\phi (0,0,x)|)^2 dx \\
&\leq & || \bW ||^2_{L^2} \cdot \sup_{\eta}\left\{|\hat{g}(\eta)|^2\right\}
\sup_{\xi, \eta}\left\{ \beta_2(\xi)^2 (1-e^{-\xi})^2 \int_I (\beta_2 (\xi)|\phi (\xi,\eta ,x)|)^2 dx \right\}\\
&=& || \bW ||^2_{L^2} \cdot \sup_{\eta}|\hat{g}(\eta)|^2\cdot
\sup_{\xi} \beta_2(\xi)^2 (1-e^{-\xi})^2 || \phi ||^2_{\mathcal{H}^+_0}. 
\end{eqnarray*}
For both cases $\xi \geq 0$ and $\xi \leq 0$, we have $\sup_{\xi} \beta_2(\xi)^2 (1-e^{-\xi})^2 =1$, which proves the lemma.
\end{proof}

Since $\bL_j$ is closed and $\bB$ is bounded, $\bS = \bL_1 + K\bB$ is also a closed operator on $\mathcal{H}^+_0$.


\subsection{The spectrum of \texorpdfstring{$\bS$}{TEXT}}\lbl{sec.spectrum}

In this section, we establish several basic facts about the spectra of $\bL_j$ and $\bS$. In particular, we
describe the residual spectrum and derive a transcendental equation for the eigenvalues of $\bS$. Throughout this section, we view $\bL_j$ and $\bS$ as operators densely defined on $\cH^+_0$
(cf.~\eqref{spaces}-\eqref{norm-Ha}).

Let $j\in\Z$ be fixed and consider
\begin{equation}\label{begin-res}
(\lambda -\bL_j)u=v.
\end{equation}
Applying the Fourier transform to both sides of \eqref{begin-res} and using \eqref{Lj},
we have 
$$
\left(\lambda -\iu\zeta -\iu j\omega\right) \cF [u] =\cF [v],
$$
where
\begin{equation}\label{F-transform}
\cF[v](\zeta, \omega ,x)=\frac{1}{(2\pi)^2} \int_{\R^2}\! e^{-\iu(\xi \zeta + \eta \omega )} v(\xi, \eta,x) d\xi d\eta. 
\end{equation}
Thus,
\begin{equation}\label{L-res}
  u=(\lambda- \bL_j)^{-1} v = \cF^{-1} \cF[u] \\
   =
    \int_{\R^2} \frac{e^{\iu (\xi \zeta + \eta \omega )}}{\lambda - \iu \zeta - \iu j\omega }
    \cF[v](\zeta, \omega ,x) d\zeta d\omega.
    \end{equation}
    Next we use \eqref{L-res} to locate the residual spectrum of $\bL_j$.
    \begin{lemma}\label{lem.residual} For arbitrary $j\in \Z,$ 
     the residual spectrum of $\bL_j$ is given by
      $$
      \sigma (\bL_j) = \{ z\in\C:\; -1\le \Re z\le 0 \}
      \footnote{Here and below, we use $\Re z$ and $\Im z$ to denote the real and imaginary
      parts of $z\in\mathbb{C}$.}.
      $$
    \end{lemma}
    \begin{proof}
      The proof is based on \eqref{L-res} and the following identity for $\Re~\lambda \neq 0$
      \begin{equation}\label{express-Cauchy}
        \frac{1}{\lambda -\iu\zeta -\iu j\omega} =
        \left\{
          \begin{array}{ll}
\displaystyle \int^{\infty}_{0} e^{-(\lambda  -\iu\zeta -\iu j\omega )t}dt,  & \;  \Re~\lambda  > 0, \\
\displaystyle -\int_{-\infty}^0  e^{-(\lambda  -\iu\zeta -\iu j\omega )t}dt, & \;\Re~\lambda  < 0, \\
                \end{array}
              \right.
\end{equation}

By plugging \eqref{express-Cauchy} into \eqref{L-res}, we have
\begin{equation}\label{rewrite-res}
(\lambda - \bL_j)^{-1}[v](\xi, \eta, x ) = \left\{ \begin{array}{ll}
\displaystyle \int^{\infty}_{0}\! e^{-\lambda t} v (\xi + t, \eta + jt,x)dt, & \;\Re~\lambda  >0,\\
\displaystyle  -\int^{0}_{-\infty} \! e^{-\lambda t} v (\xi + t, \eta + jt,x)dt & \;\Re~\lambda <0.
                                                          \end{array}
                                                        \right.
\end{equation}
The right--hand side of \eqref{rewrite-res} belongs to $\mathcal{H}^+_0$ for any
$v\in \mathcal{H}^+_0$ only if $\Re~\lambda < -1$ or $\Re~\lambda>0$.
For $ -1\leq  \Re~\lambda\ \leq 0$, the set of $v$ such that the right-hand side exists is not dense in $\mathcal{H}^+_0$.The statement of the lemma follows.
\end{proof}

Next, we turn to the eigenvalue problem for $\bS$ (cf.~\eqref{linear})
\begin{equation}\label{recall-S}
  \lambda v=(\bL_1 +K\bB) v.
\end{equation}

\begin{lemma}\label{lem.eig-S}
  Let $\mu$ be a nonzero eigenvalue of $\bW$ and let $V\in L^2(I)$ be a corresponding
  eigenfunction. Define
  \be\lbl{define-D}
D(\lambda ; \xi, \eta)=\int_{\R} \left( \frac{1}{\lambda -\iu\omega } -
        \frac{e^{-\xi}}{\lambda +1-\iu\omega } \right)
      e^{\iu\eta \omega } g(\omega)d\omega,
      \ee
and
  \begin{equation}\label{def-D}
    D(\lambda):=D(\lambda; 0,0) = 
 \int_{\R} \left( \frac{1}{\lambda -\iu\omega } -
  \frac{1}{\lambda +1-\iu\omega } \right) g(\omega )d\omega.
\end{equation}
Then the root $\lambda=\lambda(\mu)$ of 
the following equation 
\begin{equation}\label{D-eqn}
D(\lambda ) = \frac{1}{K\mu},
\end{equation}
not belonging 
to $\p\sigma(\bL_1)=\{z\in\C: \Re~z=-1 \;\mbox{or}\; \Re~z=0\}$,
is an eigenvalue of $\bS$ on $\cH^+_0$.
For each such root $\lambda=\lambda(\mu)$
the corresponding eigenfunction is given by
\begin{equation} \label{eigen-v}
v(\xi, \eta ,x) 
= D(\lambda ; \xi, \eta) V(x).
\end{equation}
\end{lemma}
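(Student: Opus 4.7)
The plan is to construct the eigenfunction explicitly by a separation-of-variables ansatz and to reduce the eigenvalue equation $\lambda v = (\bL_1+K\bB)v$ to the scalar characteristic equation $D(\lambda)=1/(K\mu)$. The first step is to try a product form $v(\xi,\eta,x)=\phi(\xi,\eta)V(x)$, exploiting that $\bB$ factors through the one-dimensional evaluation $u\mapsto u(0,0,\cdot)$ and that $\bW V=\mu V$. With this ansatz, $K\bB v = K\mu\,\phi(0,0)(1-e^{-\xi})\hat g(\eta) V(x)$, so \eqref{recall-S} collapses to the scalar inhomogeneous transport equation
\begin{equation*}
(\lambda-\partial_\xi-\partial_\eta)\phi(\xi,\eta) \;=\; K\mu\,\phi(0,0)(1-e^{-\xi})\hat g(\eta),
\end{equation*}
and it remains to find a $\phi$ satisfying this together with the self-consistency condition at $(\xi,\eta)=(0,0)$.

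Second, I would propose $\phi(\xi,\eta)=c\,D(\lambda;\xi,\eta)$ with $c$ a normalization constant and verify $(\lambda-\bL_1)D(\lambda;\xi,\eta)=(1-e^{-\xi})\hat g(\eta)$ by direct differentiation under the integral sign in \eqref{define-D}. The computation of $\partial_\xi D$ is immediate (only the second term in \eqref{define-D} carries $\xi$), while $\partial_\eta D$ pulls out a factor $i\omega$; using the algebraic identities $i\omega/(\lambda-i\omega)=-1+\lambda/(\lambda-i\omega)$ and $i\omega/(\lambda+1-i\omega)=-1+(\lambda+1)/(\lambda+1-i\omega)$, the two contributions combine cleanly into $(\partial_\xi+\partial_\eta)D=\lambda D-(1-e^{-\xi})\hat g(\eta)$. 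Substituting back, $(\lambda-\bL_1)v=c(1-e^{-\xi})\hat g(\eta)V(x)$ and $K\bB v=K\mu c\,D(\lambda)(1-e^{-\xi})\hat g(\eta)V(x)$, so equating the two forces $K\mu D(\lambda)=1$, which is exactly \eqref{D-eqn}. Any $c\ne 0$ then yields a nonzero eigenfunction, and the choice $c=1$ gives \eqref{eigen-v}.

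Third, I need to confirm that $v\in\cH^+_0$ and (in particular) belongs to the domain of $\bS$. For $\Re\lambda\notin\{0,-1\}$ both integrals in \eqref{define-D} are absolutely convergent and uniformly bounded in $\eta$ by $\|g\|_{L^1}/|\Re\lambda|$ and $\|g\|_{L^1}/|\Re\lambda+1|$ respectively. The weight $\beta_2(\xi)=\min(e^\xi,1)$ absorbs the $e^{-\xi}$ growth as $\xi\to-\infty$: splitting into $\xi\ge 0$ and $\xi<0$ yields the elementary bound $\sup_{\xi,\eta}\beta_2(\xi)|D(\lambda;\xi,\eta)|\le C(\lambda)<\infty$, so $\|v\|_{\cH^+_0}\le C(\lambda)\|V\|_{L^2(I)}$. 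Continuity of $D(\lambda;\xi,\eta)$ in $(\xi,\eta)$ follows from dominated convergence, and $\bL_1 v=\lambda v - K\bB v\in\cH^+_0$ shows $v\in D(\bS)$.

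The one subtle point, and what I expect to be the main obstacle, is that the convenient resolvent representations \eqref{L-res}/\eqref{rewrite-res} for $(\lambda-\bL_1)^{-1}$ are valid only for $\Re\lambda>0$ or $\Re\lambda<-1$, whereas the lemma asserts the conclusion for every $\lambda\notin\partial\sigma(\bL_1)$, including the strip $-1<\Re\lambda<0$ that lies entirely in the residual spectrum (Lemma~\ref{lem.residual}). This is precisely why the verification in the second step must be done by direct differentiation of \eqref{define-D} rather than by invoking the resolvent identity; doing so implicitly supplies an analytic continuation of $(\lambda-\bL_1)^{-1}[(1-e^{-\xi})\hat g(\eta)]$ across the line $\Re\lambda=0$ on the specific right-hand side relevant here, which is exactly the kind of obstruction that later forces the Gelfand triplet machinery of Section~\ref{sec.resolvent}.
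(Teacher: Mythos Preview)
Your proof is correct and covers the lemma as stated. The route differs from the paper's in a useful way. The paper starts from the eigenvalue equation $\lambda v=(\bL_1+K\bB)v$, rewrites it as $v=K(\lambda-\bL_1)^{-1}\bigl[(1-e^{-\xi})\hat g(\eta)\bigr]\,\bW[v(0,0,\cdot)]$, and then evaluates $(\lambda-\bL_1)^{-1}\bigl[(1-e^{-\xi})\hat g(\eta)\bigr]$ via the Fourier-side resolvent formula \eqref{L-res}, obtaining $D(\lambda;\xi,\eta)$ through a formal computation with $\delta(\zeta)-\delta(\zeta-i)$; membership in $\cH^+_0$ on the strip $-1<\Re\lambda<0$ is handled afterwards by a separate time-integral representation that splits the two terms of $D$. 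Your argument bypasses the resolvent entirely: the separation-of-variables ansatz together with direct differentiation of \eqref{define-D} verifies $(\lambda-\bL_1)D(\lambda;\cdot)=(1-e^{-\xi})\hat g(\eta)$ for every $\lambda\notin\partial\sigma(\bL_1)$, and the $\cH^+_0$ bound follows immediately from $|1/(\lambda-i\omega)|\le 1/|\Re\lambda|$ and $\beta_2(\xi)e^{-\xi}\le 1$. This is more elementary and avoids the formal Fourier manipulation. What the paper's approach buys in return is necessity: by starting from an arbitrary eigenfunction and inverting $(\lambda-\bL_1)$, it forces the form \eqref{eigen-v} and thereby shows (though the lemma does not state it) that the roots of \eqref{D-eqn} over $\mu\in\sigma_p(\bW)\setminus\{0\}$ exhaust the point spectrum of $\bS$. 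Your ansatz-based argument gives only sufficiency, which is all the lemma requires.
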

\begin{proof}
From \eqref{recall-S} we obtain
\begin{equation}\label{eigen-S}
v = K(\lambda -\bL_1)^{-1} (1-e^{-\xi})\hat{g}(\eta) \bW[v(0,0, \cdot)].
\end{equation}
By \eqref{L-res},
\begin{equation}\label{compute-D}
  \begin{split}
    (\lambda -\bL_1)^{-1}(1- e^{-\xi})\hat{g}(\eta)
    &= \int_{\R^2} \frac{e^{\iu (\xi \zeta + \eta \omega )}}{\lambda -\iu(\zeta +\omega) } 
      \cF[(1-e^{-\xi})\hat{g}(\eta)]d\zeta d\omega  \\
&=  \int_{\R^2} \frac{e^{\iu (\xi \zeta + \eta \omega )}}{\lambda -\iu(\zeta +\omega) } 
(\delta (\zeta) - \delta (\zeta - \iu)) g(\omega ) d\zeta d\omega \\
      &=D(\lambda ; \xi, \eta),
\end{split}
\end{equation}
where we used \eqref{define-D} in the last line.
The combination of \eqref{eigen-S} and \eqref{compute-D} yields
\be\lbl{next-step-D}
v = KD(\lambda ; \xi, \eta)\bW[v(0,0, \cdot)].
\ee
By plugging $\xi=\eta=0$ in \eqref{next-step-D}, we have 
\begin{equation}\label{plug-xi-eta}
v(0,0,x) = KD(\lambda) \bW[v(0,0,\cdot)](x).
\end{equation}
If $\lambda=\lambda(\mu)$ solves \eqref{D-eqn}, then \eqref{plug-xi-eta}
holds for
\begin{equation}\label{use-V}
v(0,0,x)=V(x).
\end{equation}
This shows that the set of roots of \eqref{D-eqn} for $\mu\in\sigma_p(\bW)$
coincides with the set of eigenvalues of $\bS$.
The corresponding eigenfunctions are found from \eqref{next-step-D}:
\begin{equation}\label{S-v}
v=K\mu D(\lambda(\mu); \xi, \eta) V,
\end{equation}
which coincides with \eqref{eigen-v} up to a multiplicative constant.
It remains to show that $v\in\mathcal{H}^+_0$ when $\lambda \notin \partial \sigma(\bL_1)$.
To this end, note that (\ref{express-Cauchy}) is applicable to (\ref{define-D}) when $\Re~\lambda \neq 0,1$ as
\begin{equation*}
D(\lambda ;\xi, \eta)= \left\{ \begin{array}{ll}
\int^{\infty}_{0}\! e^{-\lambda t} (1-e^{-(\xi + t)}) \hat{g}(\eta + t) dt,  & \mbox{if}\;\Re~\lambda > 0,\\
\int^{-\infty}_{0}\! e^{-\lambda t} \hat{g}(\eta + t) dt -
   \int^{\infty}_{0}\! e^{-\lambda t} e^{-(\xi + t)} \hat{g}(\eta + t) dt, &\mbox{if}\;  -1<\Re~\lambda < 0,\\
\int^{-\infty}_{0}\! e^{-\lambda t} (1-e^{-(\xi + t)}) \hat{g}(\eta + t) dt, &\mbox{if}\; \Re~\lambda <-1. 
                               \end{array}
                             \right.
\end{equation*}
This shows that $D(\lambda ;\xi, \eta)\in \mathcal{Y}^+_0$ and $v\in \mathcal{H}^+_0$
if $\hat{g} \in \mathcal{X}^+_0$ and  $\lambda \notin \partial \sigma (\bL_1)$.
\end{proof}


\subsection{The eigenvalue equation}\label{section3.3}

In this subsection, we study  equation \eqref{D-eqn}, whose roots yield the eigenvalues
of $\bS$.

Note that $D(\lambda; \xi, \eta)$ is a holomorphic function on 
$\mathbb{H}_0:=\{\lambda \in \C:\; \Re~\lambda>0\}$. Using Assumption~\ref{as.decay},
$D(\lambda; \cdot)$ can be extended analytically to $\mathbb{H}_a:=\{\lambda \in \C:\; \Re~\lambda>-a\},\,\, 0<a<1$.
Indeed, by rewriting the integrand in the definition of $D(\lambda; \xi, \eta)$
\begin{equation*}
  \begin{split}
D(\lambda ;\xi, \eta)&= \int^{\infty}_{0}\! e^{-\lambda t} (1-e^{-(\xi + t)}) \hat{g}(\eta + t) dt\\
&= \int^{\infty}_{0}\! e^{-(\lambda + a) t} e^{-a\eta}(1-e^{-(\xi + t)}) e^{a(\eta + t)}\hat{g}(\eta + t) dt,
\end{split}
\end{equation*}
one can see that this integral exists and defines an analytic function on $\mathbb{H}_{a}$,
because $\hat g\in \mathcal{X}^+_a$ (cf.~Assumption~\ref{as.decay}).
Moreover,  by applying Sokhotski--Plemelj formula \cite{Simon-Complex} to (\ref{define-D}), the analytic continuation is also written by
\begin{equation}\label{Dcont}
\mathcal{D}(\lambda ; \xi, \eta)
=\left\{\begin{array}{ll}
         \displaystyle
\int_{\mathbb{R}} \left( \frac{1}{\lambda -\iu\omega } - \frac{e^{-\xi}}{\lambda +1-\iu\omega } \right)
          e^{\iu\eta \omega } g(\omega )d\omega\, (=D(\lambda ;\xi, \eta)), & \Re\lambda >0,\\
 \displaystyle \lim_{x\to0+}
\int_{\mathbb{R}} \left( \frac{1}{x+\iu(y-\omega) } - \frac{e^{-\xi}}{ 1+\iu(y-\omega) } \right)
          e^{\iu\eta \omega } g(\omega )d\omega,
         & \lambda=\iu y,\\          
\displaystyle
\int_{\mathbb{R}} \left( \frac{1}{\lambda -\iu\omega } - \frac{e^{-\xi}}{\lambda +1-\iu\omega } \right)
          e^{\iu\eta \omega } g(\omega )d\omega + 2\pi e^{\eta \lambda } g(-\iu\lambda ), & -a<\Re\lambda<0.\
                                                                                        \end{array}\right.
                                                                                    \end{equation}
From these expressions, it turns out that for $-a<\Re\lambda<0$,  $\cD(\lambda;\cdot)\in\mathcal{Y}^-_a$,
while $\cD(\lambda;\cdot)\in\mathcal{Y}^+_a$ for $\Re\lambda>0$.                                                                                    

Denote $\cD(\lambda)=\cD(\lambda;0,0)$.  $\cD(\lambda)$ provides an analytic continuation
of $D(\lambda)$ to $\mathbb{H}_a$.

\begin{definition}\label{def.generalizedEV}
  Let $\mu$ be a nonzero eigenvalue of $\bW$ and let
  $V$ denote the corresponding eigenfunction. Suppose $\lambda\in\mathbb{H}_a$ is
  a root of the following equation
\begin{equation}
\mathcal{D}(\lambda ) = \frac{1}{K\mu}.
\label{g-ev}
\end{equation}
Then $\lambda$ is called a generalized eigenvalue of $\bS$.
The corresponding generalized eigenfunction is given by
\[ v=\mathcal{D}(\lambda;\xi,\mu)V(x) \in \cH_a^-.\]
\end{definition}

\begin{remark}
  Generalized eigenvalues coincide with regular eigenvalues on $\mathbb{H}_0$.
The reason why the roots of \eqref{g-ev} are called generalized eigenvalues
of $\bS$ will become clear in Section~\ref{sec.resolvent}.
See \cite{Chi15b} for the generalized spectral theory.
\end{remark}

The following theorem describes solutions of \eqref{g-ev} for even unimodal $g$.


\begin{theorem}\label{thm.crossing}
  Suppose $g$ is an even unimodal probability density function and denote
  \be\lbl{criticalK}
  K_c=(\mu_{max} g_0)^{-1},
  \ee
  where
\begin{equation}\label{g0}
  g_0:=\pi g(0) -\int_\R \frac{g(s)}{1+s^2}ds
\end{equation}
and $\mu_{max}$ is the largest (positive) eigenvalue of $\bW$.

Under Assumption~\ref{as.decay}, the following holds.
  \begin{enumerate}
  \item
    For $K\in [0,K_c)$ there are no generalized eigenvalues 
    with positive real part.
  \item For sufficiently small $\epsilon>0$, there is a real generalized eigenvalue
    $\lambda=\lambda(K)$ for $K\in (K_c-\epsilon, \infty)$. In addition,
    $$
    \lambda(K_c)=0\quad\mbox{and}\quad  \lambda^\prime(K_c)>0.
    $$
  \end{enumerate}
  \end{theorem}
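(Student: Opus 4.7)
The plan is to analyze the characteristic equation $\mathcal{D}(\lambda)=1/(K\mu)$ by pinning down $\mathcal{D}(0)$ and $\mathcal{D}'(0)$, then combining an implicit function theorem argument for Part 2 with a bound on $\mathcal{D}$ over $\overline{\mathbb{H}_0}$ for Part 1.

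First I would verify $\mathcal{D}(0)=g_0$. Taking the limit $y\to 0$ in the second line of \eqref{Dcont}, the Sokhotski--Plemelj formula extracts a $\pi g(0)$ contribution from the $(\lambda-i\omega)^{-1}$ singularity; the remaining principal-value integral and the imaginary contribution from the second term vanish by evenness of $g$. Hence $\mathcal{D}(0)=\pi g(0)-\int_\R g(\omega)/(1+\omega^2)\,d\omega=g_0$, so $(\lambda,K)=(0,K_c)$ is a root of \eqref{g-ev} with $\mu=\mu_{\max}$.

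Next, since $\mathcal{D}$ is analytic on $\mathbb{H}_a$, the chain rule gives $i\,\mathcal{D}'(0)=\partial_y\mathcal{D}(iy)|_{y=0}$. Differentiating at $y=0$, the derivative of the real part of $\mathcal{D}(iy)$ vanishes by evenness ($g'(0)=0$, and the derivative of $\int_\R g(\omega)/(1+(y-\omega)^2)\,d\omega$ at $y=0$ is an odd-integrand integral). For the imaginary part, two integration-by-parts identities---turning the Hilbert-transform derivative into $-\int_\R g'(\omega)/\omega\,d\omega$ and rewriting the derivative of the other complex term as $-\int_\R \omega g'(\omega)/(1+\omega^2)\,d\omega$---combine via $1/\omega-\omega/(1+\omega^2)=1/(\omega(1+\omega^2))$ to give
\[\mathcal{D}'(0)=\int_\R\frac{g'(\omega)}{\omega(1+\omega^2)}\,d\omega.\]
Because $g$ is even unimodal, $g'(\omega)/\omega\le 0$ on $\R\setminus\{0\}$ with strict inequality on a set of positive measure, so $\mathcal{D}'(0)<0$. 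The analytic implicit function theorem then produces a branch $\lambda=\lambda(K)$ with $\lambda(K_c)=0$, and differentiating $\mathcal{D}(\lambda(K))=1/(K\mu_{\max})$ at $K_c$ yields $\lambda'(K_c)=-\mu_{\max}g_0^2/\mathcal{D}'(0)>0$. Extending this real branch to all $K\in(K_c-\epsilon,\infty)$ follows by continuation on the real axis, since $\mathcal{D}$ restricted to $(-a,\infty)\cap\R$ is continuous, real, equal to $g_0$ at $0$, and tends to $0$ at $+\infty$.

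For Part 1 I would establish the bound $|\mathcal{D}(\lambda)|\le g_0$ on $\overline{\mathbb{H}_0}$. Using the Laplace-type representation $\mathcal{D}(\lambda)=\int_0^\infty e^{-\lambda t}(1-e^{-t})\hat g(t)\,dt$ on $\Re\lambda\ge 0$ together with $g_0=\int_0^\infty(1-e^{-t})\hat g(t)\,dt$, the bound on the imaginary axis follows from $|e^{-iyt}|=1$ when $\hat g\ge 0$. Maximum modulus then extends it to $\overline{\mathbb{H}_0}$, using that $\mathcal{D}$ is analytic on a neighborhood of $\overline{\mathbb{H}_0}$ by \eqref{Dcont} and vanishes at infinity. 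For $K<K_c$ and any $\mu\in\sigma_p(\bW)\setminus\{0\}$, $|1/(K\mu)|\ge 1/(K\mu_{\max})>g_0\ge|\mathcal{D}(\lambda)|$, ruling out roots of \eqref{g-ev} in $\mathbb{H}_0$. The main obstacle is justifying $|\mathcal{D}(iy)|\le g_0$ for general even unimodal $g$ without assuming $\hat g\ge 0$; for such $g$ an argument-principle analysis of the image curve $\{\mathcal{D}(iy):y\in\R\}$, combined with continuation from $K=0$ and the conjugation symmetry $\mathcal{D}(\bar\lambda)=\overline{\mathcal{D}(\lambda)}$, would rule out any crossing into $\mathbb{H}_0$ before $K=K_c$.
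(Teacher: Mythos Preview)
Your treatment of Part 2 is correct and essentially matches the paper: compute $\cD(0)=g_0$, show $\cD'(0)<0$ via an integration-by-parts/Sokhotski--Plemelj argument exploiting that $g'(\omega)/\omega\le 0$, then invoke the implicit function theorem and differentiate $\cD(\lambda(K))=(K\mu_{\max})^{-1}$ at $K_c$.

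For Part 1 there is a genuine gap. The bound $|\cD(\lambda)|\le g_0$ on $\overline{\mathbb{H}_0}$ via the Laplace representation requires $\hat g\ge 0$, and even unimodality of $g$ does not imply this (an analytic even unimodal density close to the uniform one on $[-1,1]$ has $\hat g(\eta)\approx \sin\eta/\eta$, which changes sign). You acknowledge this, but your fallback---argument principle plus continuation from $K=0$ plus conjugation symmetry---is missing the one concrete fact that makes it work: the boundary curve $\cD(i\R)$ meets the positive real semiaxis at \emph{exactly one} point, namely $(g_0,0)$. Conjugation symmetry by itself does not rule out a second crossing at some $g_1>g_0$, which would allow roots in $\mathbb{H}_0$ already for $K\mu\in(g_1^{-1},g_0^{-1})$.

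The paper supplies this uniqueness directly: for $x>0$ one rewrites
\[
-\Im D(x+iy)=-\int_0^\infty \frac{\omega\bigl((x+1)^2-x^2\bigr)}{(x^2+\omega^2)\bigl((x+1)^2+\omega^2\bigr)}\bigl(g(y+\omega)-g(y-\omega)\bigr)\,d\omega,
\]
and unimodality gives $g(y+\omega)-g(y-\omega)<0$ for $y>0,\ \omega>0$ (and $>0$ for $y<0$), so the imaginary part vanishes only at $y=0$. With that in hand, the Argument Principle finishes Part 1 immediately: $\cD(iy)\to 0$ as $|y|\to\infty$, so the image is a bounded closed curve; the winding number about any point of $(g_0,\infty)$ is constant and equals its value near $+\infty$, which is $0$. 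Your Laplace-bound route, when it applies (e.g.\ Gaussian or Cauchy $g$), is a pleasant shortcut that yields the stronger statement $\sup_{\overline{\mathbb{H}_0}}|\cD|=g_0$, but it does not cover the full hypothesis of the theorem.
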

  \begin{remark}
    Using the identity $\pi^{-1}=\int_\R (1+s^2)^{-1}ds$, one can see that $g_0$ defined
    in \eqref{g0} is positive:
\begin{equation}\label{g>0}
 g_0=\int_\R \frac{g(0)-g(s)}{1+s^2}ds>0,
\end{equation}
\end{remark}
\begin{remark}\lbl{rem.generalized}
 The positive generalized eigenvalue
  $\lambda(K), K>K_c,$ identified in the theorem is an eigenvalue of $\bS$. At $K=K_c+0$ it
  hits the residual spectrum of $\bS$. There are no eigenvalues of $\bS$ for $K\le K_c$. However,
  as a generalized eigenvalue, $\lambda(K)$ is well defined for
  $K_c-\epsilon<K\le K_c$. It crosses the imaginary
  axis transversally at $K=K_c$.
  \end{remark}

  The proof of the theorem relies on the following observations.
  \begin{lemma}\label{lem.D}
  Let $g$ be an even unimodal probability density function (Figure~\ref{f.1}\textbf{a}).
  Then 
    \begin{eqnarray}
      \label{D-1}
      \lim_{x\to 0+}  \cD (x+\iu y)&=& \pi g(y) -\iu \cdot \pv\int_\R \frac{g(s)}{y-s}ds -
                                    \int_\R \frac{g(s)}{1+\iu (y-s)} ds,\\
     \label{D-2}
      \lim_{y\to \pm\infty}\lim_{x\to 0+}  \cD (x+\iu y)&=&0,\\
      \label{D-3}
      \cD^\prime(0)&<&0.
    \end{eqnarray}
    Here, $\pv$ stands for the Cauchy principal value of an improper integral.
    In addition, $\mathcal{C}=\cD(\iu\R)$ is a bounded closed curve which intersects 
    the positive real semiaxis
    at a unique point $(g_0,0)$ (see Figure~\ref{f.1}\textbf{b}).
\end{lemma}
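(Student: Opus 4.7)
For \eqref{D-1}, the plan is to apply the Sokhotski--Plemelj formula to the boundary limit in \eqref{Dcont}. Writing $\lambda = x + \iu y$ and letting $x\downarrow 0$, the distributional identity $\lim_{x\to 0+}(x + \iu u)^{-1} = \pi\delta(u) - \iu\,\pv\, u^{-1}$ with $u = y - \omega$ reproduces the first two terms of \eqref{D-1}; the second integrand $(1 + x + \iu(y-\omega))^{-1}$ is continuous at $x=0$ and bounded by $1$, so dominated convergence delivers the third. For \eqref{D-2}, I would bound the three summands separately as $|y|\to\infty$: $g(y)\to 0$ by continuity and integrability of $g$; the principal-value term tends to $0$ via the geometric expansion $(y-s)^{-1}=y^{-1}\sum_{k\ge 0}(s/y)^k$ (valid for $|y|$ larger than the effective support of $g$, justified by the rapid decay implied by Assumption~\ref{as.decay}); and $\int g(s)(1 + \iu(y-s))^{-1}\, ds\to 0$ by dominated convergence with envelope $g$.

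For \eqref{D-3}, I would compute the derivative from the integral representation on $\Re\lambda > 0$, which is legitimate by analyticity of $\cD$ at $0$. Evenness of $g$ kills the imaginary parts for real $\lambda > 0$ and gives $D(\lambda) = F(\lambda) - F(1+\lambda)$ with
\begin{equation*}
F(\mu) := \int_\R \frac{\mu\, g(v)}{\mu^2 + v^2}\, dv = \int_\R \frac{g(\mu u)}{1 + u^2}\,du.
\end{equation*}
Differentiating and reverting the substitution yields $F'(\mu) = \int_\R v g'(v)/(\mu^2 + v^2)\,dv$, whose integrand is bounded near $v=0$ thanks to $g'(0)=0$. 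Dominated convergence then gives $F'(0+) = \int_\R g'(v)/v\, dv$, and the identity $1/v - v/(1+v^2) = 1/(v(1+v^2))$ combined with the oddness of $g'$ produces
\begin{equation*}
\cD'(0) = F'(0+) - F'(1) = 2\int_0^\infty \frac{g'(v)}{v(1+v^2)}\, dv < 0,
\end{equation*}
strict negativity following from $g' \le 0$ on $(0,\infty)$ and the non-constancy of the real-analytic $g$.

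For the curve statement, boundedness follows from continuity of $\cD$ on $\iu\R$ together with \eqref{D-2}, and adjoining the limit point $0$ yields a closed set. To locate real-axis intersections, I would take the imaginary part of \eqref{D-1}, use the partial-fraction identity $1/(y-s) - (y-s)/(1+(y-s)^2) = 1/[(y-s)(1+(y-s)^2)]$, and symmetrize in $u = y-s$ using the oddness of $1/(u(1+u^2))$ to arrive at
\begin{equation*}
\Im \cD(\iu y) = -\int_0^\infty \frac{g(y-u) - g(y+u)}{u(1+u^2)}\, du.
\end{equation*}
For $y > 0$ and $u > 0$, even-unimodal $g$ satisfies $g(y-u) \ge g(y+u)$ (handling $y - u \ge 0$ and $y - u < 0$ separately, and in the latter using $g(y-u) = g(u-y)$ together with $u-y < u+y$); real analyticity upgrades this to strict inequality on a set of positive measure (since otherwise $g$ would be $2y$-periodic, contradicting $g\in L^1(\R)$), so $\Im\cD(\iu y) < 0$ for $y > 0$. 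The reflection $y\mapsto -y$ and evenness of $g$ give the opposite sign for $y < 0$, and combined with $\cD(0) = g_0 > 0$, the curve $\mathcal{C}$ meets the real axis only at the point $(g_0,0)$.

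The main technical obstacle I expect is the computation of $\cD'(0)$: the integrand in $F'(\mu)$ is integrable near $v=0$ only because of the cancellation $g'(0)=0$, and the dominated-convergence argument for $\mu\downarrow 0$ must exploit this via a two-regime estimate (Taylor bound $|g'(v)|=O(|v|)$ near $0$, and plain $|g'(v)|$ away from $0$). The remaining sign analyses rely only on evenness, unimodality, and real analyticity of $g$ and are elementary.
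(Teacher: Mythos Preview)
Your proposal is correct and follows essentially the same route as the paper: Sokhotski--Plemelj for \eqref{D-1}, decay of each summand for \eqref{D-2}, and the same symmetrized integral $\int_0^\infty \frac{g(y-u)-g(y+u)}{u(1+u^2)}\,du$ (the $x\to 0+$ limit of the paper's formula \eqref{uniqueness}) for the uniqueness of the real-axis crossing. The one noteworthy variation is in \eqref{D-3}: the paper integrates by parts in $D'(\lambda)$ to replace $g$ by $g'$ and then applies Sokhotski--Plemelj, whereas you use the scaling representation $F(\mu)=\int g(\mu u)/(1+u^2)\,du$ and differentiate under the integral; both routes land on the identical expression $2\int_0^\infty g'(v)/(v(1+v^2))\,dv<0$, and your explicit periodicity argument for strictness of the sign is a detail the paper leaves implicit.
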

We postpone the proof of Lemma~\ref{lem.D} until after the proof of the theorem.

\begin{figure}
  \centering
 \textbf{a} \includegraphics[width=.47\textwidth]{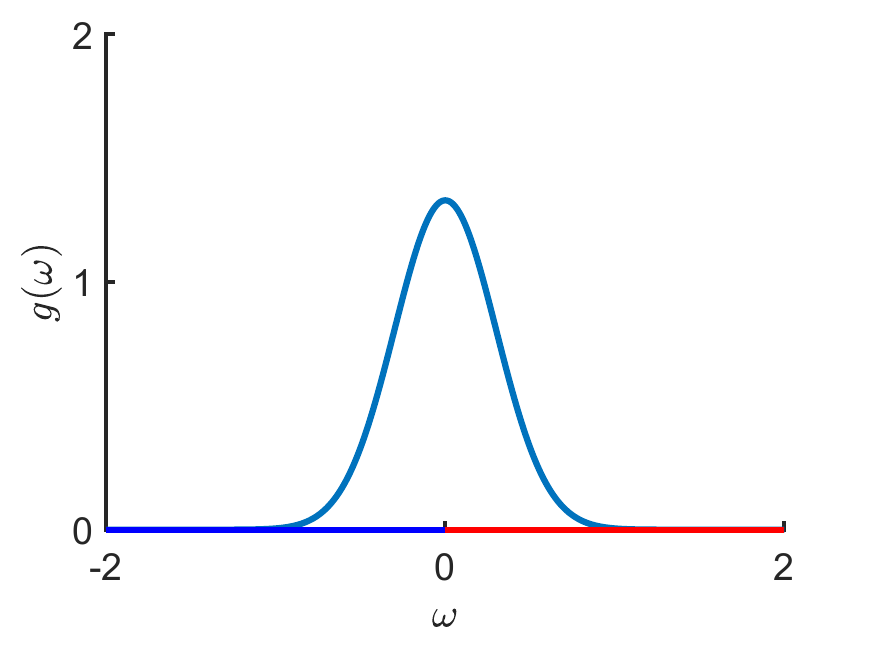}
 \textbf{b}  \includegraphics[width=.47\textwidth]{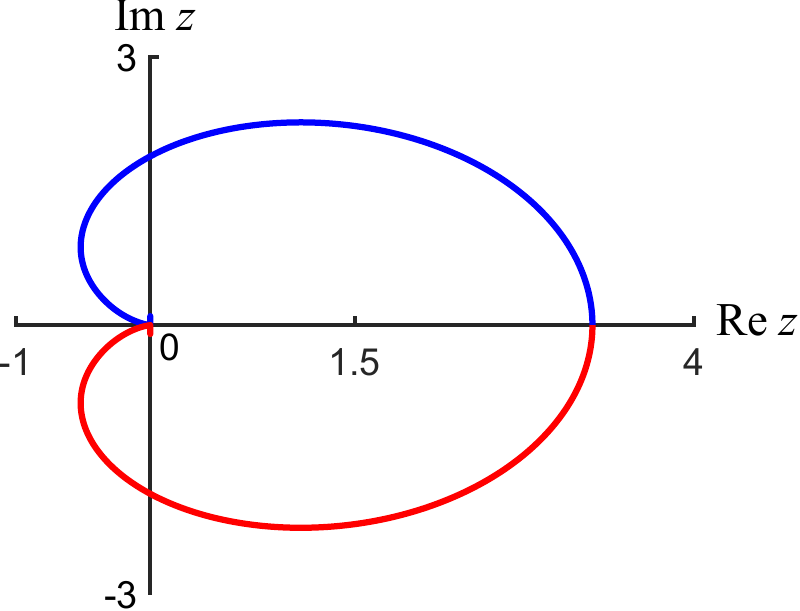}
  \caption{A unimodal probability density function $g$ (\textbf{a}) and
    the corresponding critical curve (\textbf{b}).} 
             	\label{f.1}
             \end{figure}

\begin{proof}[Proof of Theorem~\ref{thm.crossing}]
  Let $\mu>0$ be an eigenvalue of $\bW$.
  Note that $\mathcal{C}=\cD(\iu \R)$ is a bounded closed curve
  intersecting positive real semiaxis at a single point $(g_0,0)$ by
  Lemma \ref{lem.D}. Since $\cD (\lambda)$ is holomorphic in
  $\mathbb{H}_0$,  by the Argument Principle, the eigenequation
  \begin{equation}\label{EVeqn}
  \cD(\lambda) =(K\mu)^{-1}
  \end{equation}
  has a root in $\mathbb{H}_0$ if the winding number of $\cD$ about $(K\mu)^{-1}$ is positive. Therefore,
  \eqref{EVeqn} has no roots in $\mathbb{H}_0$ for $0<K\mu<g_0^{-1}$ for any $\mu \in \sigma_p (\bW)\backslash \{0\}$, i.e., for $K\in [0,K_c)$ with $K_c=(\mu_{max} g_0)^{-1}$.
  Since $g(\omega)$ is an even function, $\cD(\iu \R)$ intersects positive real semiaxis when $\lambda = 0$. This implies that $g_0 = \lambda (0)$, which is obtained from (\ref{D-1}) as 
  \begin{eqnarray*}
\lim_{y\to 0} \lim_{x\to 0+}  \cD (x+ \iu y)&=& \pi g(0) +\iu \cdot \pv \int_\R \frac{g(s)}{s}ds -
                                    \int_\R \frac{1+\iu s}{1+s^2}g(s) ds \\
&=& \pi g(0)-\int_\R \frac{1}{1+s^2}g(s) ds.
\end{eqnarray*}
  This proves the first statement of the theorem with formulae (\ref{criticalK}) and (\ref{g0}). 
  
  Since $\cD^\prime(0)\neq 0$ by Lemma \ref{lem.D} and $\cD$ is holomorphic at $0$, it is locally invertible, and the inverse
  is a holomorphic function. By differentiating both sides of \eqref{EVeqn} with respect to $K$, we
  have
  $$
  \lambda^\prime(K_c)=\frac{-1}{\mu_{\max}K^2_c \cD^\prime (0)}>0.
  $$
  Since $\mu_{max}$ is an isolated eigenvalue of $\bW$, there are no other roots of \eqref{EVeqn}
  for $K\in (K_c-\epsilon, K_c+\epsilon)$.
 \end{proof}

\begin{proof}[Proof of Lemma~\ref{lem.D}]
Equation~\eqref{D-1} follows from the Sokhotski--Plemelj formula \cite{Simon-Complex}.
Equation~\eqref{D-2} follows from \eqref{D-1}, because $g\in L^1(\R)$.
 To show \eqref{D-3}, note first
  \be\lbl{differentiateD}
  \begin{split}
    \cD^\prime(\lambda ) &= -\int_\R \left( \frac{1}{(\lambda -\iu s)^2 } 
                  - \frac{1}{(\lambda +1 -\iu s )^2 } \right)  g(s ) ds \\
                &=-\iu \int_\R \left( \frac{1}{\lambda -\iu s} - \frac{1}{\lambda +1 -\iu s } \right)
                g^\prime(s) ds.
              \end{split}
              \ee
              Using the Sokhotski--Plemelj formula again, from \eqref{differentiateD} we get
              \be\lbl{SPagain}
\lim_{\lambda \to 0+}D^\prime(\lambda )
= \pv\int_\R \frac{g^\prime(s)}{s} ds  + \iu\left(
\int_\R\frac{1}{1 -\iu s } g^\prime(s) ds  - \pi g^\prime (0)\right).
\ee

Since $g^\prime$ is an odd function, we have
\begin{equation*}
  \begin{split}
\lim_{\lambda \to 0+}D^\prime(\lambda )
&=\lim_{\varepsilon \to 0+} \int^\infty_\varepsilon \frac{g^\prime(s) - g^\prime(-s)}{s} ds 
-  \int_\R\frac{s g^\prime(s) }{1+s^2 } ds \\
&= 2 \lim_{\varepsilon \to 0+} \int^\infty_\varepsilon \frac{g^\prime(s)}{s }ds 
- 2 \int^\infty_0\frac{s g^\prime(s)}{1+s^2 } ds \\
&= 2 \lim_{\varepsilon \to 0+} \int^\infty_\varepsilon \frac{g^\prime(s)}{s(1+s^2)} ds<0,
\end{split}
\end{equation*}
where we used that $g^\prime(s)\le 0$ for $s>0$ and is not everywhere zero on $\R^+$ to obtain the
last inequality.

Finally, \eqref{D-2} implies that $\mathcal{C}$ is a bounded closed curve. From \eqref{D-1},
for $\mathcal{G}(y)=\lim_{x \to 0+}\cD(x+\iu y )$ we have
\begin{eqnarray}
  \label{G-re}
  \Re\mathcal{G}(y)&=&\pi g(y)-\int_\R \frac{g(s)}{1+(y-s)^2}ds,\\
  \label{G-im}
  \Im\mathcal{G}(y)&=& \int_\R \frac{(y-s)g(s)}{1+(y-s)^2}ds -\pv\int_\R \frac{g(s)}{y-s}ds.
\end{eqnarray}
Here, $\Re z$ and $\Im z$ stand for the real and imaginary parts of $z\in\C$ respectively.
Using  even symmetry of $g$, from \eqref{G-re} and \eqref{G-im} it follows that $(g_0,0)$ is
a point of intersection of $\mathcal{C}$ with the positive semiaxis (Figure~\ref{f.1}\textbf{b}). Even symmetry and unimodality of
$g$ provides the uniqueness:
\be\lbl{uniqueness}
\begin{split}
-\Im D(x+\iu y) &= \int_\R \left( \frac{y-\omega}{x^2+(y-\omega)^2} -
  \frac{y-\omega}{(x+1)^2+(y-\omega)^2} \right) g(\omega) d\omega\\
&=- \int_\R
\left(
  \frac{\omega}{x^2+\omega^2} -
  \frac{\omega}{(x+1)^2+\omega^2}
\right) g(y+\omega) d\omega\\
&=- \left\{ \int_{-\infty}^0 +\int_0^\infty \right\}
\left(
\frac{\omega}{x^2+\omega^2} -
\frac{\omega}{(x+1)^2+\omega^2}\right) 
g(y+\omega) d\omega\\
&= -\int_0^\infty \left(
  \frac{\omega}{x^2+\omega^2} -
  \frac{\omega}{(x+1)^2+\omega^2}
\right) g(y+\omega) d\omega\\
&+
\int_0^\infty \left(
  \frac{\omega}{x^2+\omega^2} - \frac{\omega}{(x+1)^2+\omega^2}
\right) g(y-\omega) d\omega\\
&=-\int_0^\infty 
\frac{\omega \left( (x+1)^2-x^2\right)}{(x^2+\omega^2)\left((x+1)^2 +\omega^2\right)}
\left( g(y+\omega)-g(y-\omega) \right)d\omega.
\end{split}
\ee
Even symmetry of $g$ implies that $y=0$ solves $\Im D(x+\iu y)=0$ (cf. \eqref{uniqueness}).
For unimodal $g$ this solution is unique, because in this case 
$
g(y+\omega)-g(y-\omega)>0
$
when $y<0, \; \omega >0$ and $g(y+\omega)-g(y-\omega)<0
$
when $y>0, \; \omega >0$.

\end{proof}

\subsection{The resolvent and the Riesz projector}\label{sec.resolvent}

In this section, we compute the resolvent operator
\begin{equation*}
  \bR_\lambda=\left(\lambda -\bS\right)^{-1}
\end{equation*}
and the Riesz projector for $\bS$.

\begin{lemma} For $\phi\in\cH_0^+$, 
  \begin{equation}\label{expressR}
    \bR_\lambda\phi =
    \left(\lambda -\bL_1\right)^{-1} \phi + K  D(\lambda;\xi,\eta) 
    \left(\bI-KD(\lambda)\bW\right)^{-1}
  \bW\left[ \left(\left(\lambda-\bL_1\right)^{-1}\phi \right)(0,0,\cdot)\right].
 \end{equation}
 \end{lemma}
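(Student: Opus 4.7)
The plan is to derive \eqref{expressR} constructively by solving the resolvent equation $(\lambda-\bS)v=\phi$ for $v\in\cH_0^+$, reducing it to an integral equation on $L^2(I)$ via the structure of $\bB$, and inverting with $(\bI-KD(\lambda)\bW)^{-1}$.

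First I would write $\bS=\bL_1+K\bB$ and rearrange to
\[
v \;=\; (\lambda-\bL_1)^{-1}\phi \;+\; K\,(\lambda-\bL_1)^{-1}\bB[v],
\]
which is legitimate whenever $\lambda\notin\sigma(\bL_1)$, i.e., $\Re\lambda\notin[-1,0]$, by Lemma~\ref{lem.residual}. Next I would exploit the rank-one structure of $\bB$ along the $(\xi,\eta)$-variables: since $\bB[v](\xi,\eta,x)=(1-e^{-\xi})\hat g(\eta)\bW[v(0,0,\cdot)](x)$, and $(\lambda-\bL_1)^{-1}$ acts only in $(\xi,\eta)$ with $x$ a parameter, the computation \eqref{compute-D} already carried out in the proof of Lemma~\ref{lem.eig-S} gives
\[
(\lambda-\bL_1)^{-1}\bB[v](\xi,\eta,x) \;=\; D(\lambda;\xi,\eta)\,\bW[v(0,0,\cdot)](x).
\]
Substituting this back, the resolvent equation becomes
\[
v(\xi,\eta,x) \;=\; \bigl((\lambda-\bL_1)^{-1}\phi\bigr)(\xi,\eta,x)
\;+\; K\,D(\lambda;\xi,\eta)\,\bW[v(0,0,\cdot)](x).
\]

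The key step is then to close the equation for the trace $v(0,0,\cdot)\in L^2(I)$. Setting $\xi=\eta=0$ and using $D(\lambda;0,0)=D(\lambda)$ (cf.\ \eqref{def-D}) yields
\[
v(0,0,\cdot) \;=\; \bigl((\lambda-\bL_1)^{-1}\phi\bigr)(0,0,\cdot) \;+\; K\,D(\lambda)\,\bW[v(0,0,\cdot)],
\]
or equivalently
\[
(\bI-KD(\lambda)\bW)\,v(0,0,\cdot) \;=\; \bigl((\lambda-\bL_1)^{-1}\phi\bigr)(0,0,\cdot).
\]
Inverting and substituting the resulting expression for $\bW[v(0,0,\cdot)]$ (using that $\bW$ commutes with the scalar-valued operator $KD(\lambda)\bW$, so $\bW\circ(\bI-KD(\lambda)\bW)^{-1}=(\bI-KD(\lambda)\bW)^{-1}\circ\bW$) gives precisely \eqref{expressR}.

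The main obstacle is justifying the inversion of $\bI-KD(\lambda)\bW$ on $L^2(I)$. I would argue as follows: $\bW$ is compact self-adjoint with real eigenvalues $\{\mu_k\}$ accumulating only at $0$, so $\bI-KD(\lambda)\bW$ is invertible in $\mathcal{B}(L^2(I))$ iff $KD(\lambda)\mu_k\neq 1$ for every $k$, which is exactly the statement that $\lambda$ is not a root of \eqref{D-eqn} for any $\mu\in\sigma_p(\bW)\setminus\{0\}$, i.e., not an eigenvalue of $\bS$ per Lemma~\ref{lem.eig-S}. Hence for $\lambda\in\rho(\bS)\cap\{\Re\lambda\notin[-1,0]\}$ the operator exists and is bounded, and the right-hand side of \eqref{expressR} defines an element of $\cH_0^+$ (using boundedness of $\bW:L^2(I)\to L^2(I)$, the evaluation bound at $(\xi,\eta)=(0,0)$ controlled by the $\cH_0^+$-norm, and $D(\lambda;\cdot)\in\mathcal{Y}_0^+$ as established in the proof of Lemma~\ref{lem.eig-S}). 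A direct substitution then verifies $(\lambda-\bS)\bR_\lambda\phi=\phi$, completing the proof.
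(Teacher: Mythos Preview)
Your proposal is correct and follows essentially the same route as the paper: write $v=(\lambda-\bL_1)^{-1}\phi+K(\lambda-\bL_1)^{-1}\bB[v]$, use \eqref{compute-D} to identify the second term as $KD(\lambda;\xi,\eta)\bW[v(0,0,\cdot)]$, evaluate at $\xi=\eta=0$ to close the equation on $L^2(I)$, and invert $\bI-KD(\lambda)\bW$. The only cosmetic difference is that the paper applies $\bW$ before inverting (obtaining $\bW[v(0,0,\cdot)]$ directly), whereas you solve for $v(0,0,\cdot)$ first and then invoke commutativity of $\bW$ with $(\bI-KD(\lambda)\bW)^{-1}$; your additional remarks on the invertibility condition via $\sigma_p(\bW)$ are a welcome elaboration.
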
 
\begin{proof}
From the definitions of $\bR_\lambda$ and $\bS$ we have
\begin{equation}\label{def-R}
  \left(\lambda -\bL_1-K\bB\right)\bR_\lambda \phi =\phi\quad \forall
  \phi\in\cH_0^+.
\end{equation}
We rewrite \eqref{def-R} as
\begin{equation*}
  \left(\lambda -\bL_1\right) \left(\bI-K \left(\lambda -\bL_1\right)^{-1}\bB\right)
  \bR_\lambda \phi =\phi.
\end{equation*}
Thus, \eqref{compute-D} gives
\begin{alignat}{1}\nonumber
  \bR_\lambda \phi &= \left(\lambda -\bL_1\right)^{-1} \phi + K \left(\lambda -\bL_1\right)^{-1}
  \bB\bR_\lambda\phi\\
  \label{R-identity}
  &=\left(\lambda -\bL_1\right)^{-1} \phi + K
  D(\lambda;\xi,\eta) \bW\left[(\bR_\lambda \phi)(0,0,\cdot)\right].
\end{alignat}
By setting $\xi=\eta=0$ and applying $\bW$ to both sides of \eqref{R-identity}, after
some algebra we obtain
\begin{equation}\label{after-algebra}
  \bW\left[ (\bR_\lambda\phi)(0,0,\cdot)\right]=\left(\bI-KD(\lambda)\bW\right)^{-1}
  \bW\left[ \left(\left(\lambda-\bL_1\right)^{-1}\phi \right)(0,0,\cdot)\right].
\end{equation}
By plugging \eqref{after-algebra} into \eqref{R-identity}, we obtain \eqref{expressR}.
\end{proof}

From \eqref{expressR}  one can see that the spectrum of $\bS$, as an operator on $\cH^+_0$,
is the union of the residual spectrum of $\bL_1$ and point spectrum of $\bS$
coming from the factor $\left(\bI-KD(\lambda)\bW\right)^{-1}$;
$$
\sigma (\bS)= \sigma (\bL_1) \bigcup \sigma_p (\bS).
$$
Although $\{ z\in\C:\; -1\le \Re z\le 0 \}$ is a residual spectrum of $\bL_1$, as shown in Section \ref{section3.3}, $D(\lambda; \xi,\eta)$ and $D(\lambda)$ have the analytic continuations from $\bbH_0$ to $\bbH_a$ denoted by $\cD(\lambda; \xi,\eta)$ and $\cD(\lambda)$, respectively.
Recall that $\cD(\lambda; \xi,\eta) \in \mathcal{Y}_a^-$ when $\hat{g} \in \mathcal{X}^+_a$.
In a similar manner, we can verify that $\left(\lambda-\bL_1\right)^{-1}$ has an analytic continuation from $\bbH_0$ to $\bbH_a$ as an operator from $\mathcal{H}_a^+$ to $\mathcal{H}_a^-$.
Using them, we define the analytic continuation of $\bR_\lambda$ from $\bbH_0$ to $\bbH_a$ by
\begin{equation}\label{genR}
    \cR_\lambda\phi =
    \left(\lambda -\bL_1\right)^{-1} \phi + K  \cD(\lambda;\xi,\eta) 
    \left(\bI-K\cD(\lambda)\bW\right)^{-1}
  \bW\left[ \left(\left(\lambda-\bL_1\right)^{-1}\phi \right)(0,0,\cdot)\right].
 \end{equation}
 This is an operator from $\mathcal{H}_a^+$ into $\mathcal{H}_a^-$ called the generalized resolvent.
 Note that the generalized eigenvalues in Definition~\ref{def.generalizedEV}
 are the poles of $\cR_\lambda$.
 Recall the definitions of the Banach spaces $\mathcal{H}^\pm_a$ and
$\mathcal{H}^\pm_0$ (cf.~\eqref{spaces}) and note that they form Gelfand triplet:
$$
\mathcal{H}^+_a \subset \mathcal{H}^+_0 = \mathcal{H}^-_0 \subset \mathcal{H}^-_a.
$$
For the analytic continuation, the domain of $\cR_\lambda$ is restricted to $\cH^+_a$
and the range is extended to $\cH^-_a$.

  The corresponding generalized Riesz projection  $\mathcal{P}_\lambda:\cH^+_a\rightarrow \cH^-_a$
  is given by
  \be\lbl{gen-Riesz}
  \mathcal{P}_\lambda \phi =\frac{1}{2\pi\iu} \oint_{c} \cR_z \phi dz,
  \ee
  where $c$ is a positively oriented closed curve, which encircles an eigenvalue $\lambda$
  but no other generalized eigenvalues. Note that the generalized eigenfunction corresponding
  to $\lambda$ in Definition~\ref{def.generalizedEV} is an element of the image
  $\mathcal{P}_\lambda$.
We conclude this section with the computation of the generalized Riesz projection 
for the bifurcating eigenvalue $\lambda=0$:
\be\lbl{RieszS}
\cP_0 [\phi]=\frac{1}{2\pi\iu} \oint_c \cR_z \phi dz,
\ee
where $c$ is a positively oriented contour around the origin, which does not contain
any other generalized eigenvalues of $\bS$.
  
Here and in the remainder of this paper, we suppress the subscript of $\cP_0$ since
from now on we will only be interested in the bifurcating eigenvalue.
Further, suppose that $\mu_{max}$, the largest positive eigenvalue of $\bW$, is simple.
The Riesz projection for $\bW$ corresponding to $\mu_{max}$ is denoted by
\be\lbl{RieszW}
\bP[\Phi]=\frac{1}{2\pi\iu} \oint_C \left( z-\bW\right)^{-1} \Phi dz,
\ee
where $C$ is a positively oriented contour around $\mu_{max}$, which does not contain
any other eigenvalues of $\bW$.

\begin{lemma}\lbl{lem.R}
  \be\lbl{compute-cP}
  \cP[\phi](\xi,\eta,x)=\rho_1 \cD(0 ;\xi,\eta)\bP[\Phi] (0,0,x),
  \ee
  where $\Phi:=\lim_{\lambda\to 0+}\left(\lambda -\bL_1\right)^{-1}\phi$ and $\rho_1>0$.
  \end{lemma}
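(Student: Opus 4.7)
The plan is to evaluate the generalized Riesz projection $\cP[\phi]=\frac{1}{2\pi\iu}\oint_c \cR_z\phi\, dz$ by isolating the unique singularity of $\cR_z$ inside $c$, which at the bifurcation value $K=K_c$ sits at $z=0$. First I substitute the expression \eqref{genR} and split $\cR_z\phi$ into two pieces: the ``free'' resolvent $(z-\bL_1)^{-1}\phi$ and the ``corrective'' term $K\cD(z;\xi,\eta)(\bI-K\cD(z)\bW)^{-1}\bW\bigl[((z-\bL_1)^{-1}\phi)(0,0,\cdot)\bigr]$. Since $(z-\bL_1)^{-1}$ admits an analytic continuation across the imaginary axis as an operator $\cH^+_a\to\cH^-_a$, its contour integral vanishes by Cauchy's theorem, so only the corrective term contributes.

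Next I analyze the operator factor $(\bI-K\cD(z)\bW)^{-1}$ near $z=0$. Setting $\mu(z):=(K\cD(z))^{-1}$, the identity
\begin{equation*}
(\bI - K\cD(z)\bW)^{-1} = \mu(z)\bigl(\mu(z)\bI - \bW\bigr)^{-1}
\end{equation*}
reduces the analysis to the ordinary resolvent of the compact self-adjoint operator $\bW$. At $K=K_c$ we have $\cD(0)=g_0$, hence $\mu(0)=(K_c g_0)^{-1}=\mu_{max}$ by \eqref{criticalK}, and Lemma~\ref{lem.D} yields $\mu'(0)=-\cD'(0)/(K_c g_0^2)>0$. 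Since $\mu_{max}$ is a simple isolated eigenvalue of $\bW$, $(\mu\bI-\bW)^{-1}$ has a simple pole at $\mu=\mu_{max}$ with residue equal to the spectral projector $\bP$. Composing with the analytic change of variable $\mu(z)$ shows that $(\bI-K\cD(z)\bW)^{-1}$ itself has a simple pole at $z=0$ with residue $\mu_{max}\bP/\mu'(0)$.

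Since $\cD(z;\xi,\eta)$, $\bigl((z-\bL_1)^{-1}\phi\bigr)(0,0,\cdot)$, and the outer operator $\bW$ are analytic (resp.\ bounded) at $z=0$ in the appropriate Banach spaces, the residue theorem gives
\begin{equation*}
\cP[\phi](\xi,\eta,x) = \frac{K_c\mu_{max}}{\mu'(0)}\,\cD(0;\xi,\eta)\,\bP\bigl[\bW[\Phi(0,0,\cdot)]\bigr](x).
\end{equation*}
Exploiting that $\bP$ projects onto the $\mu_{max}$-eigenspace, so that $\bW\bP=\mu_{max}\bP$, the inner $\bW$ is absorbed, yielding the claimed formula with $\rho_1 = K_c\mu_{max}^2/\mu'(0) = -1/\cD'(0)>0$, where the last equality uses $K_c\mu_{max}g_0=1$ and positivity follows from $\cD'(0)<0$ (Lemma~\ref{lem.D}).

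The main obstacle, I expect, is not the scalar residue arithmetic but verifying the operator-theoretic manipulations in the correct topology: the factorization of $(\bI-K\cD(z)\bW)^{-1}$ and its residue expansion must be legitimate as operator-valued functions acting between the spaces of the Gelfand triplet $\cH^+_a\subset\cH^+_0\subset\cH^-_a$, and the analytic change of variable $z\mapsto\mu(z)$ has to transport the spectral projection $\bP$ of $\bW$ at $\mu_{max}$ into the correct residue at $z=0$. Once these functional-analytic points are secured, the remainder of the argument is a direct application of Cauchy's theorem and the spectral identity $\bW\bP=\mu_{max}\bP$.
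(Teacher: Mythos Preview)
Your proposal is correct and follows essentially the same route as the paper: both arguments discard the free resolvent term as analytic inside $c$, then use the substitution $\mu(z)=(K\cD(z))^{-1}$ (the paper calls it $\alpha$) to reduce the singularity of $(\bI-K\cD(z)\bW)^{-1}$ at $z=0$ to the simple pole of $(\mu-\bW)^{-1}$ at $\mu_{max}$, and finally absorb the factor $\bW$ via $\bW\bP=\mu_{max}\bP$ to obtain $\rho_1=-1/\cD'(0)$. The only cosmetic difference is that the paper carries out the change of variable explicitly inside the contour integral (with the Jacobian $d\alpha = -\cD'(z)/(K\cD(z)^2)\,dz$), whereas you extract the residue directly from the Laurent expansion of $(\mu(z)-\bW)^{-1}$; one small point of care is that the operator order you need is $\bP\bW$ rather than $\bW\bP$, but since $\bW$ is self-adjoint its spectral projection commutes with it, so the identity holds either way.
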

  \begin{proof}
    First, note
\begin{equation}\label{generalized-resS}
      \cR_\lambda [\phi](\xi,\eta, x) =\Phi_\lambda(\xi,\eta,x)
      +K\mathcal{D}(\lambda; \xi,\eta) \left( \bI -K \mathcal{D}(\lambda) \bW\right)^{-1} \bW\left[
        \Phi_\lambda (0,0,\cdot)\right](x),
  \end{equation}
  where
  \begin{equation}\label{def-Phi-lambda}
    \Phi_\lambda:=\left(\lambda -\bL_1\right)^{-1}\phi.
    \end{equation}
  
    Using \eqref{generalized-resS}, we have
    \be\lbl{new-resS}
      \oint_c \cR_z [\phi](\xi,\eta,x) dz = 
      K\oint_c\cD(z;\xi,\eta) \left(\bI-K\cD(z) \bW\right)^{-1}
      \bW[\Phi_z(0,0,\cdot)](x)dz,
      \ee
      where we used $\oint_c \Phi_z(\xi,\eta,x) dz =0,$ because
      $\Phi_z(\xi,\eta,x)$ is regular inside $c$. By continuity $\cD^\prime(z)\neq 0$
      on $c$ provided $c$ is sufficiently small. 
Change variable in \eqref{new-resS}
      to $z:=z(\alpha)$
      such that
      \be\lbl{change-var}
      \alpha=\left(K\cD(z)\right)^{-1}.
      \ee
Note that 
      \be\lbl{change-var2}
      \mu_{max}=\left(K_c\cD(0)\right)^{-1},
      \ee
as follows from \eqref{g-ev}.
      We have
      \be\lbl{comp-resolvent}
      \begin{split}
        \oint_c\cR_z [\phi](\xi,\eta,x) dz &= 
        - \displaystyle \oint_C \frac{\cD\left(z(\alpha); \xi, \eta \right)}
        {\alpha^{2} \cD^\prime \left(z(\alpha)\right)}
\bW(\bI - \alpha^{-1}\bW )^{-1} 
 [\Phi_{z(\alpha)}] (0, 0, x) d\alpha\\
 &= -\oint_C \frac{\cD(z(\alpha); \xi, \eta )}
 {\alpha  \cD^\prime \left(z(\alpha)\right)} \bW ( \alpha - \bW )^{-1} 
    \left[ \Phi_{z(\alpha)}\right](0, 0,x)  d\alpha, 
  \end{split}
  \ee
  where $C$ stands for the image of $c$ under \eqref{change-var}.
  Since $\mu_{max}$ is a simple eigenvalue of $\bW$, $\left(\alpha-\bW\right)^{-1}$ has
  a simple pole $\alpha = \mu_{max}$ inside $C$ while the other factor under the integral is regular.
  Thus, \eqref{comp-resolvent} yields
  \be\lbl{projection-continue}
    \oint_c \cR_z [\phi](\xi,\eta,x) dz =
    \frac{-\cD(0;\xi,\eta)}{\mu_{max}\cD^\prime(0)} \bW
    \oint_C \left(\alpha-\bW\right)^{-1} \left[ \Phi \right](0,0,x) d\alpha.
    \ee
    Noting that the integral on the right--hand side implements Riesz projection of $\bW$ for $\mu_{max}$,
    we further have

 \be\lbl{and-continue}
      \begin{split}
\frac{1}{2\pi \iu}\oint_c \cR_z [\phi](\xi,\eta,x) dz
   &=
 \frac{-\cD(0;\xi,\eta)}{\mu_{max}\cD^\prime(0)} \bW\frac{1}{2\pi \iu}
    \oint_C \left(\alpha-\bW\right)^{-1} \left[ \Phi \right](0,0,x) d\alpha \\
 &= \frac{-\cD(0;\xi,\eta)}{\mu_{max}\cD^\prime(0)} \bW \bP\left[\Phi \right](0,0,x) \\
 &=   \rho_1\cD(0;\xi,\eta) \bP\left[\Phi \right](0,0,x),
   \end{split}
  \ee

    where $\rho_1=-\left(\cD^\prime(0)\right)^{-1}>0$ by Lemma~\ref{lem.D}. 
      \end{proof}


\subsection{Asymptotic stability} \label{sec.stability}
We conclude this section with an application to linear asymptotic stability of mixing. For $K\in [0,K_c),$
$\bS$ has no eigenvalues, nonetheless mixing is asymptotically stable in the appropriate topology.

\begin{theorem}\label{thm.asymp}
  Suppose Assumption~\ref{as.decay} holds. Then for $K\in [0, K_c)$ the mixing state is
  linearly asymptotically stable under the
  perturbations from
  $\cH^+_a\subset\cH^-_a$ in the topology of $\cH^-_a$.
\end{theorem}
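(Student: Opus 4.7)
The plan is to represent the linearized semigroup $\{e^{t\bS}\}_{t\ge 0}$ by a Dunford--Taylor inverse Laplace integral and then deform the inversion contour across the imaginary axis using the analytic continuation $\cR_\lambda$ built in Section~\ref{sec.resolvent}. This is the generalized-spectral-theory strategy that was used to handle the residual spectrum $\sigma(\bL_1)=\{-1\le\Re\lambda\le 0\}$ for the original Kuramoto model in \cite{Chi15a,Chi15b,ChiMed19b}: the residual strip is part of the spectrum of $\bS$ as an operator on $\cH^+_0$, but on the smaller domain $\cH^+_a$ with range extended to $\cH^-_a$ it is no longer an obstruction, and the resolvent continues analytically across it.

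First I would observe that $\bL_1$ generates a contraction semigroup on $\cH^+_0$ (by the method of characteristics applied to $\p_t=\p_\xi+\p_\eta$ together with the monotonicity of the weight $\beta_2(\xi)=\min\{e^\xi,1\}$) and that $\bB$ is bounded by the lemma preceding Section~\ref{sec.spectrum}, so $\bS=\bL_1+K\bB$ generates a strongly continuous semigroup. For $\phi\in\cH^+_a$ and $c$ exceeding the growth bound of this semigroup,
\begin{equation*}
e^{t\bS}\phi \;=\; \frac{1}{2\pi\iu}\int_{c-\iu\infty}^{c+\iu\infty} e^{\lambda t}\, \bR_\lambda\phi\, d\lambda.
\end{equation*}
By Section~\ref{sec.resolvent}, on $\cH^+_a$ the resolvent extends to the generalized resolvent $\cR_\lambda:\cH^+_a\to\cH^-_a$ of \eqref{genR}, analytic on $\bbH_a$ away from its generalized eigenvalues. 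Theorem~\ref{thm.crossing} rules out generalized eigenvalues in $\overline{\bbH_0}$ when $K\in[0,K_c)$; combining this with the continuity of $\cD$ on $\overline{\bbH_a}$, the fact that $\mu_{\max}$ is isolated in $\sigma_p(\bW)$, and the decay $\cD(\iu y)\to 0$ as $|y|\to\infty$ from \eqref{D-2}, a compactness argument yields $\delta\in(0,a)$ such that $\bI-K\cD(\lambda)\bW$ is invertible for all $\lambda$ with $\Re\lambda\ge-\delta$. Consequently $\cR_\lambda\phi$ is analytic on this half-plane.

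Next I would deform the contour to $\Re\lambda=-\delta$. Since no generalized eigenvalues are crossed,
\begin{equation*}
e^{t\bS}\phi \;=\; \frac{1}{2\pi\iu}\int_{-\delta-\iu\infty}^{-\delta+\iu\infty} e^{\lambda t}\, \cR_\lambda\phi\, d\lambda\qquad\text{as an identity in }\cH^-_a.
\end{equation*}
A uniform bound $\|\cR_\lambda\phi\|_{\cH^-_a}\le M(|\Im\lambda|)\,\|\phi\|_{\cH^+_a}$ with $M$ integrable then yields the desired
\begin{equation*}
\|e^{t\bS}\phi\|_{\cH^-_a} \;\le\; C\, e^{-\delta t}\,\|\phi\|_{\cH^+_a},\qquad t\ge 0.
\end{equation*}

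The main obstacle is justifying the contour shift across the residual strip and controlling the decay of $\cR_\lambda\phi$ along vertical lines. The crucial point is that the Paley--Wiener decay of $\hat g$ from Assumption~\ref{as.decay} is exactly what allows $(\lambda-\bL_1)^{-1}$ to be continued from $\bbH_0$ across the imaginary axis as an operator $\cH^+_a\to\cH^-_a$, using the explicit formula~\eqref{rewrite-res} together with the exponential weight $\beta^{-}_1(\eta)=\min\{1,e^{\alpha\eta}\}$ in \eqref{weights}; this is the whole reason for the Gelfand triplet. Establishing the integrable majorant $M(|\Im\lambda|)$ on $\Re\lambda=-\delta$ then reduces to technical estimates combining \eqref{rewrite-res}, \eqref{Dcont}, the assumed decay of $\hat g$, and the compactness of $\bW$; the absence of nearby generalized eigenvalues and the final exponential estimate follow routinely once these estimates are in hand.
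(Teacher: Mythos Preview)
Your proposal is correct and follows essentially the same route as the paper: represent $e^{t\bS}$ by the Laplace inversion integral, use the analytic continuation $\cR_\lambda$ from Section~\ref{sec.resolvent} together with Theorem~\ref{thm.crossing} to shift the contour to $\Re\lambda=-\delta<0$, and read off the decay in $\cH^-_a$. The only item you omit is the decoupled equations \eqref{linear-2} for $j\neq 1$, which the paper dispatches directly via the explicit shift semigroup $e^{t\bL_j}[\phi](\xi,\eta,x)=\phi(\xi+t,\eta+jt,x)$.
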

\begin{proof}
We estimate the semigroups of the linearized systems (\ref{linear}), (\ref{linear-2}).
Each of the operators $\bL_j,\, j\ge 1,$  
generates a continuous semigroup on $\cH_a^+\subset\cH_a^-$ in
the topology of $\cH^-_a$:
\begin{alignat}{1}\lbl{eLj}
  e^{\bL_j t} [\phi ](\xi, \eta, x) &= \phi (\xi + t, \eta + jt, x),\quad j\ge 1,\\
                                        \nonumber
    \lim_{t\to\infty} \|e^{\bL_j t} [\phi] (t)\|_{\cH_a^-}&=0.                                    
\end{alignat}
For $\bL_0$, the same statement holds because of the constraint $w_0(t, \infty, \eta ,x) = 0$.

Since $\bB$ is a bounded operator, $\bS$ generates a continuous semigroup expressed by the Laplace inversion formula for $\phi \in \cH_a^+$:
\be
\begin{split}
 e^{\bS t} [\phi ](\xi, \eta, x)
 &= \frac{1}{2\pi \iu}\lim_{c\to \infty} \int^{b +\iu c}_{b -\iu c}\! 
e^{\lambda t} \cR_\lambda [\phi ] (\xi, \eta, x) d\lambda 
\end{split}
\ee
for some positive $b$.
Let $K\in [0,K_c)$ be arbitrary but fixed.
By Theorem~\ref{thm.crossing}, there exists $\epsilon>0$
such that there are no generalized eigenvalues in $\{-\epsilon \le \Re z \}$
and the generalized resolovent $\cR_\lambda$ is holomorphic in this region.
Hence, the integral pass can be moved to the left half plane and we obtain
\be\lbl{eSt}
\begin{split}
 e^{\bS t} [\phi ](\xi, \eta, x)
 &= \frac{1}{2\pi \iu}\lim_{c\to \infty} \int^{-\epsilon +\iu c}_{-\epsilon -\iu c}\! 
e^{\lambda t} \cR_\lambda [\phi ] (\xi, \eta, x) d\lambda \\
&= \frac{e^{-\varepsilon t}}{2\pi \iu}\lim_{c\to \infty} \int^{c}_{-c}\! \iu e^{\iu \lambda t}
\cR_{i\lambda-\epsilon} [\phi ] (\xi, \eta, x) d\lambda \rightarrow 0 
\end{split}
\ee
in $\cH^-_a$ as $t\to\infty$. 
\end{proof}

      \section{The pitchfork bifurcation}
      \lbl{sec.bifurcation}
      \setcounter{equation}{0}

      In this section, we study a pitchfork bifurcation of mixing underlying the onset of synchronization
      in the second order KM. This bifurcation is identified as the point in the parameter space at which
      a generalized eigenvalue crosses the imaginary axis. Its analysis requires generalized spectral
      theory \cite{Chi15b}.

      \subsection{Assumptions and the main result}\label{sec.assume}
      Throughout this section, we assume the following
      \begin{description}
      \item[(A-1)]
        $\mu_{max}>0$ is a simple eigenvalue of $\bW$ with the corresponding eigenfunction
        $V_{max}$;
      \item[(A-2)] $g$ is an even unimodal probability density function subject to
        Assumption~\ref{as.decay}.
      \end{description}
      Assumption (\textbf{A-1}) holds for many common graph sequences encountered in applications
      including all--to--all, Erd\H{o}s--R{\' e}nyi, small--world graphs, to name a few \cite{ChiMed19a}.
      Assumption (\textbf{A-2}) covers the most representative case. It can be relaxed.
      We impose it for simplicity.

      \begin{remark}\label{rem.negative}
        Suppose $\bW$ has negative eigenvalues. Denote the smallest negative eigenvalue $\mu_{min}$
        and assume that it is simple with the corresponding eigenfunction $V_{min}$. Then after
        setting $W:=-W$ and $K:=-K$, we find that the resultant system satisfies (\textbf{A}) and the
        analysis below shows that the original system undergoes a pitchfork bifurcation
        at $K_c^-=(g_0 \mu_{min})^{-1}<0$.
        \end{remark}
     
      Under these assumptions, as follows from the analysis in Section~\ref{sec.spectral},
      at $K=K_c$, $\bS$ has a simple generalized eigenvalue $\lambda=0$. The corresponding
      eigenfunction is (cf.~\eqref{S-v})
      \begin{equation}\label{vc}
v_0=K_c \lim_{\lambda\to 0+} D(\lambda(\mu_{max}); \xi, \eta) V_{max}\in \cH^-_a.
\end{equation}
      
Finally, we postulate the existence of a one--dimensional center manifold for the vector field
in \eqref{1-8} for $K=K_c$. As we already stated in the Introduction, the proof of this fact is a
very technical problem, which is not
addressed in this paper. The proof of existence of a center manifold in the classical KM
can be found in \cite{Chi15a}.

In the remainder of this section, we perform a center manifold
reduction and show that at $K=K_c$, the system undergoes a pitchfork bifurcation.
The branch of stable steady state solutions bifurcating from mixing
is given in terms of the corresponding values of the order parameter:
\be\lbl{h-asympt}
|h_\infty (x)| =K_c^{-2} (\mu_{max} \rho_2 \rho_3(x))^{-1/2}  \sqrt{K-K_c} +
O(K-K_c),
      \ee
where $\rho_2>0$ and $\rho_3(x)$ are defined below.
In many applications, $\rho_3(x)\equiv 1$ (see Remark~\ref{remark_rho3} for more details).

\subsection{The center manifold reduction}\label{sec.center}
The existence of the one--dimensional center manifold at $K=K_c$ implies the
following Ansatz for the solutions of \eqref{1-8} with $K=K_c+\epsilon^2$,
$0<\epsilon\ll 1$:
      \begin{eqnarray}
       \lbl{An-1}
        w_1 &=&\cP w_1+(\bI-\cP) w_1 = \epsilon c(t) v_0+O(\epsilon^2),\\
        \lbl{An-2}
        w_k&=&q_k(w_1), \; k\in\N_1:=\{ 0 \}\bigcup \{2,3,\dots\},
      \end{eqnarray}
       where $\cP$ is the Riesz projector (cf.~\eqref{RieszS}), $c(t)$ is the function determining the
      location on the slow manifold, and 
     $q_k, k\in\N_1,$ are smooth functions such that $q_k(0)=q^\prime_k(0)=0$.

     Using the Ansatz \eqref{An-1} and \eqref{An-2}, below we derive
      \eqref{h-asympt}. Let $K=K_c+\epsilon^2$ and  rewrite equations for $w_0$, $w_1$, and
      $w_2$ in \eqref{1-8} as follows
      \begin{eqnarray}\lbl{dw0}
        \partial_t w_0 &=&  \partial_{\xi_0} w_0- Ke^{-\xi_0}
                           \left( h(t,x) w_{-1}-\overline{h(t,x)}w_1\right),\\
        \lbl{dw1}
        \partial_t w_1 &=& \bS_0w_1 +\epsilon^2 \left(1-e^{-\xi_1}\right) h(t,x)\hat g(\eta)\\
        \nonumber
        &+ & K \left(1-e^{-\xi_1}\right) \left( h(t,x) w_0-\overline{h(t,x)}w_2\right),\\
        \lbl{dw2}
        \partial_t w_2 &=& \partial_{\xi_2} w_2 + 2\partial_{\eta} w_2
+K(2-e^{-\xi_2}) \left(h(t,x) w_1 - \overline{h(t,x)}w_3\right), 
      \end{eqnarray}
     where  $\xi_i, \; i\in\N,$ are as defined in \eqref{1-6}, and $\bS_0$ is defined by setting $K=K_c$ in
      the expression for $\bS$ (cf.~\eqref{linear}). In particular,
      \begin{equation}\label{def-S0}
      \bS w_1= \bS_0 w_1 +\epsilon^2 \left( 1-e^{-\xi_1}\right) h(t,x) \hat g(\eta).
      \end{equation}

      \begin{lemma}\lbl{lem.w0w2}
        Let
        \begin{eqnarray}\lbl{P0}
P_0(\lambda ; \xi_1, \omega ) &=& \frac{e^{-\xi_1}-1}{\lambda -\iu\omega }
 -\frac{e^{-\xi_1}-1}{\lambda +\iu\omega } - \frac{e^{-\xi_1}-1}{\lambda +1-\iu\omega } 
- \frac{1}{2}\frac{(e^{-\xi_1}-1)^2}{\lambda +1-\iu\omega } \\
\nonumber
                              & +& \frac{e^{-\xi_1}-1}{\lambda +1+\iu\omega }-
                                   \frac{1}{2}\frac{(e^{-\xi_1}-1)^2}{\lambda +1+\iu\omega }, \\
  \lbl{P2}
P_2(\lambda ; \xi_1, \omega ) &=& \frac{1}{(\lambda -\iu \omega )^2} 
   + \frac{1}{2}\frac{(e^{-\xi_1}+1)^2}{(\lambda +1 -\iu \omega )^2}+\frac{1}{\lambda -\iu\omega }
                                  -\frac{1}{\lambda +1-\iu \omega } \\
  \nonumber 
& -& \frac{e^{-\xi_1}+1}{\lambda -\iu\omega }
     - \frac{2(e^{-\xi_1}+1)}{\lambda +1/2 - \iu\omega } +
     \frac{3(e^{-\xi_1}+1)}{\lambda +1 - \iu\omega },\\
          \lbl{P1}
          P_1(\lambda ; \xi_1, \eta ) &=& \int_\R \left(\lambda-\bL_1\right)^{-1} \left[
                                            \left(1-e^{-\xi_1}\right) \left(P_0-P_2\right) e^{\iu \eta\omega}
                                            \right] g(\omega) d\omega.
\end{eqnarray}
        On the center manifold, we have
        \begin{eqnarray}
          \lbl{htx}
          h(t,x) &=& \epsilon c(t) V_{max}(x) + O(\epsilon ^2),\\
          \lbl{w0}
w_0 &=& K_c^2\epsilon ^2 |c(t)|^2 |V_{max}|^2 
        \lim_{\lambda \to 0+} \int_{\R} P_0(\lambda ; \xi_1, \omega )
        e^{\iu\eta \omega }g(\omega )d\omega
        + O(\epsilon^3), \\
          \lbl{w2}
 w_2 &=&  K_c^2 \epsilon^2 c(t)^2 V_{max}^2 
         \lim_{\lambda \to 0+} \int_{\R} P_2(\lambda ; \xi_1, \omega ) e^{\iu\eta \omega }
         g(\omega )d\omega + O(\epsilon^3).
\end{eqnarray}
In addition,
\be\lbl{rho2}
\rho_2=-\lim_{\lambda\to 0+} P_1(\lambda;0,0) >0.
\ee
\end{lemma}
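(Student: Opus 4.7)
The plan is a formal center manifold reduction: substitute the Ansatz \eqref{An-1}--\eqref{An-2} into the evolution equations \eqref{dw0}--\eqref{dw2} and match orders of $\epsilon$. The guiding observations are that on the center manifold the slaved modes $w_0$ and $w_2$ are $O(\epsilon^2)$, while $c(t)$ evolves on the slow scale $\partial_t c = O(\epsilon^2)$, so $\partial_t w_k = O(\epsilon^3)$ for $k \ne 1$. Consequently, to order $\epsilon^2$ the equations for $w_0$ and $w_2$ become stationary problems that I invert using the analytically continued resolvents $\lim_{\lambda \to 0+}(\lambda - \bL_0)^{-1}$ and $\lim_{\lambda \to 0+}(\lambda - \bL_2)^{-1}$.

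For \eqref{htx} I substitute $w_1 = \epsilon c(t) v_0 + O(\epsilon^2)$ into $h(t,x) = \int_I W(x,y) w_1(t,0,0,y)\,dy$ and use \eqref{vc} together with the eigenequation $\mathcal{D}(0) = (K_c\mu_{max})^{-1}$ from \eqref{g-ev} to get $v_0(0,0,x) = \mu_{max}^{-1} V_{max}(x)$, whence $\bW[v_0(0,0,\cdot)] = V_{max}$ and \eqref{htx} follows. For \eqref{w0} I use the reality constraint $w_{-1} = \overline{w_1}$ so that the nonlinearity in \eqref{dw0} becomes $-Ke^{-\xi_0}(h\overline{w_1} - \overline{h}w_1)$, of size $\epsilon^2 |c(t)|^2$; dropping the $O(\epsilon^3)$ time derivative gives $w_0 = K\lim_{\lambda\to 0+}(\lambda - \bL_0)^{-1}[e^{-\xi_0}(h\overline{w_1} - \overline{h}w_1)] + O(\epsilon^3)$, and expanding via \eqref{L-res} and \eqref{compute-D} assembles the six rational pieces that form $P_0$ in \eqref{P0}. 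An analogous computation in \eqref{dw2} (with $w_3$ discarded as $O(\epsilon^3)$) yields \eqref{w2}; the squared denominator $(\lambda + 1 - \iu\omega)^{-2}$ enters through the convolution of two copies of the resolvent kernel of $v_0$ with itself.

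The main obstacle is the positivity $\rho_2 > 0$. Substituting \eqref{P0}--\eqref{P2} into \eqref{P1}, setting $\xi_1 = \eta = 0$, and applying \eqref{rewrite-res} produces a sum of integrals of $g(\omega)$ against singular kernels of the form $(\lambda - \iu\omega)^{-1}$, $(\lambda - \iu\omega)^{-2}$, and $(\lambda + 1 \pm \iu\omega)^{-1}$. Taking $\lambda \to 0+$, the Sokhotski--Plemelj formula \cite{Simon-Complex} converts the first two into delta contributions proportional to $g(0)$ and $g'(0)$ plus principal value integrals of $g(\omega)/\omega$ and $g'(\omega)/\omega$, while the remaining bounded kernels can be evaluated directly. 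Invoking even symmetry of $g$ (so $g'$ is odd and imaginary parts cancel after pairing) and unimodality ($g'(\omega) \le 0$ on $\R^+$, not identically zero), I would mimic the argument used to establish $\cD'(0) < 0$ in Lemma~\ref{lem.D} and rewrite $\rho_2$ as an integral with a sign-definite integrand, analogous to \eqref{g>0} and \eqref{uniqueness}. The bookkeeping across all singular terms is tedious but structurally parallels Lemma~\ref{lem.D}, so I expect positivity to hold for the same underlying reason.
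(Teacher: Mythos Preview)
Your plan is essentially the paper's: \eqref{htx} is derived exactly as you describe; $w_2$ is obtained by solving the stationary $\bL_2$-equation (the paper states the answer and checks it by substitution, which is equivalent to your resolvent inversion); $w_0$ is obtained by integrating in $\xi_0$ against the boundary condition $w_0(t,\infty,\eta,x)=0$, which is precisely $\lim_{\lambda\to 0+}(\lambda-\bL_0)^{-1}$ since $\bL_0=\partial_{\xi_0}$; and the coordinate relations $e^{-\xi_2}=e^{-\xi_1}+1$, $e^{-\xi_0}=e^{-\xi_1}-1$ from \eqref{1-6} are then used to write everything in the common variable $\xi_1$.

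Two points in your sketch need sharpening. First, the reality constraint is not simply $w_{-1}=\overline{w_1}$ but $w_{-1}(t,\xi_{-1},\eta,x)=\overline{w_1(t,\xi_{-1},-\eta,x)}$; the $\eta\mapsto-\eta$ flip is what produces the $\lambda+\iu\omega$ denominators in $P_0$. Second, for $\rho_2$ your list of singular kernels stops at $(\lambda-\iu\omega)^{-2}$, but the term $(\lambda-\iu\omega)^{-2}$ in $P_2$, multiplied by the constant part of $(1-e^{-\xi_1})$ and then hit by $(\lambda-\bL_1)^{-1}$, yields a cubic singularity $(\lambda-\iu\omega)^{-3}$. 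The paper carries the computation through and finds
\[
\lim_{\lambda\to 0+} P_1(\lambda;0,0)=\tfrac{1}{2}\pi g''(0)+2\,\mathrm{pv}\!\int_0^\infty \frac{g'(\omega)}{\omega}\,d\omega-\int_\R \frac{4\omega^6+13\omega^4+25\omega^2+10}{(1+\omega^2)^3(1+4\omega^2)}\,g(\omega)\,d\omega,
\]
and each of the three terms is nonpositive by unimodality and evenness of $g$, exactly the mechanism you anticipated from Lemma~\ref{lem.D}.
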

The number $\rho_2$ is determined solely by $g(\omega )$. Its expression will be given in the end of Sec.\ref{sec.proof}.
         With Lemma~\ref{lem.w0w2} in hand, we now turn to the center manifold reduction.
          The proof of the lemma is given in the next subsection.

          We want to project both sides of \eqref{dw1} onto the eigenspace spanned by $v_0$. To this end,
          we note
          \be\lbl{note-1}
          \cP [\p_t w_1] = \epsilon \dot c(t) v_0,\quad\mbox{and}\quad \cP[\bS_0 w_1]=0,
          \ee
           as follows from \eqref{An-1}.
          Further, from Lemma~\ref{lem.R},
          \be\lbl{note-2}
          \cP\left[\left(1-e^{-\xi_1}\right) h(t,x)\hat g(\eta)\right]=
          \rho_1 \lim_{\lambda\to 0+} \left\{ \cD(\lambda;\xi_1,\eta)  \left. \bP \left[
            \left(\lambda-\bL_1\right)^{-1} \left( 1-e^{-\xi_1}\right) \hat g(\eta) h(t,x) \right]\
            \right|_{\xi_1= \eta=0}\right\}.
          \ee
              By the first line in \eqref{compute-D},
          \be\lbl{by-first}
          \left(\lambda-\bL_1\right)^{-1} \left( 1-e^{-\xi_1}\right) \hat g(\eta)=\cD(\lambda; \xi_1,\eta).
          \ee
          After plugging \eqref{by-first} and \eqref{htx} into \eqref{note-2},
          we have
          \be\lbl{first-group}
          \begin{split}
            \cP \left[ \left( 1-e^{-\xi_1} \right) h(t,x)\hat g(\eta) \right] &=
            \rho_1\lim_{\lambda\to 0+} \left\{ \cD(\lambda; \xi_1, \eta)
              \bP \left[ \cD(\lambda; \xi_1, \eta) h(t,x) \right]_{\xi_1=\eta=0}\right\} \\
        &=
            \rho_1\lim_{\lambda\to 0+} \left\{ \cD(\lambda; \xi_1, \eta) \cD(\lambda)\right\}
              \bP\left[ \epsilon c(t) V_{max}\right] (0,0,x)+O(\epsilon^2)\\
            &= \epsilon \rho_1 c(t) V_{max}(x) \lim_{\lambda\to 0+}
            \left\{ \cD(\lambda; \xi_1, \eta) \cD(\lambda) \right\} +O(\epsilon^2)\\
          &=\frac{\epsilon\rho_1 c(t) v_0}{K_c^2 \mu_{max}} +O(\epsilon^2),
        \end{split}
        \ee
        where we used \eqref{change-var2} and \eqref{vc} to obtain the last equality.
        
Finally, we turn to the last group of terms of the right--hand side of \eqref{dw1}.
By Lemma~\ref{lem.w0w2},
          \be\lbl{substract}
          \begin{split}
          h(t,x) w_0-\overline{h(t,x)} w_2& =
          \epsilon^3 K_c^2 c(t)|c(t)|^2 V_{max} |V_{max}|^2 \times\\
          & \times \lim_{\lambda\to 0+}
          \int_\R \left[P_0-P_2\right](\lambda;\xi_1,\omega)
          e^{\iu \eta\omega} g(\omega)d\omega + O(\epsilon^4).
\end{split}
          \ee
         Further,
          \be\lbl{continue}
          \begin{split}
            \left(\lambda-\bL_1\right)^{-1}\left[(1-e^{-\xi_1})
              \left(h w_0-\overline{h} w_2\right) \right]&=
            \epsilon^3 K_c^2 c(t)|c(t)|^2 V_{max} |V_{max}|^2 
          \lim_{\lambda\to 0+}
              P_1(\lambda;\xi_1,\eta)\\
             & +O(\epsilon^4),
            \end{split}
            \ee
            where $P_1$ is defined in \eqref{P1}.
This yields
\be\lbl{this-yields}
\begin{split}
  \cP \left[(1-e^{-\xi_1})
    \left(h w_0-\overline{h} w_2\right) \right] &= \rho_1\lim_{\lambda\to 0+}
\left\{ \cD(\lambda; \xi_1, \eta) \bP \left[ \left(\lambda-\bL_1\right)^{-1}
    \left(1-e^{-\xi_1}\right)\left( h w_0-\bar{h} w_2\right)\right]_{\xi_1=\eta=0}\right\}\\
&=\epsilon^3 \rho_1 K_c^2 c(t) |c(t)|^2 \lim_{\lambda\to 0+}
\left\{ \cD(\lambda; \xi_1, \eta) \bP \left[ V_{max} |V_{max}|^2\right] P_1(\lambda; 0,0)
  \right\}
  +O(\epsilon^4)\\
  &=
\epsilon^3 \rho_1 K_c c(t)|c(t)|^2
  v_0 \frac{\bP \left[ V_{max}|V_{max}|^2\right]}
    {V_{max}} \lim_{\lambda\to 0+} P_1(\lambda; 0,0) + O(\epsilon^4).
  \end{split}
  \ee

  By projecting both sides of \eqref{dw1} onto the subspace spanned by $v_0$ and
  using \eqref{note-1}, \eqref{first-group}, and \eqref{this-yields}, we obtain
\begin{eqnarray*}
\epsilon \dot{c} v_0 = \frac{\epsilon^3 \rho_1}{K_c^2 \mu_{max}} c(t) v_0
  +\epsilon^3 K_c^2 \rho_1 c(t) |c(t)|^2 v_0 \frac{\bP [ V_{max} |V_{max}|^2]}{V_{max}}
     \lim_{\lambda\to 0+} P_1(\lambda; 0,0) + O(\epsilon^4).
\end{eqnarray*}
This yields
  \be\label{center}
  \dot c=\frac{\epsilon^2\rho_1}{K_c^2 \mu_{max}} c(t)
  \left( 1+ K_c^4 \mu_{max} \lim_{\lambda\to 0+} P_1(\lambda; 0,0)
    \frac{\bP [ V_{max} |V_{max}|^2]}{V_{max}}  |c(t)|^2 \right) + O(\epsilon^3).
    \ee
Equation \eqref{center} describes  the dynamics on the center manifold.
Recalling that $ h(t,x) =\epsilon c(t) V_{max}+O(\epsilon^2)$ from Lemma~\ref{lem.w0w2}, 
we rewrite this equation to the equation 
of the local order parameter $h = h(t,x)$ as
\begin{eqnarray}
\frac{dh}{dt} &=& \frac{\rho_1}{K_c^2 \mu_{max}} h 
  \left( \epsilon^2 + K_c^4 \mu_{max} \lim_{\lambda\to 0+} P_1(\lambda; 0,0) 
   \frac{\bP [ V_{max} |V_{max}|^2]}{V_{max}|V_{max}|^2} |h|^2  \right) + O(\epsilon ^4) \nonumber \\
&=& \frac{\rho_1}{K_c^2 \mu_{max}} h 
  \left( \epsilon^2 - K_c^4 \mu_{max} 
     \rho_2 \rho_3  |h|^2  \right) + O(\epsilon ^4)\lbl{h-ode}
\end{eqnarray}
    where $\rho_2>0$ is defined in \eqref{rho2} and $\rho_3$ is defined by
    \be\lbl{rho-3}
    \rho_3(x) := \frac{\bP [ V_{max}(x) |V_{max}(x)|^2]}{V_{max}(x)|V_{max}(x)|^2}.
\ee
Equation \eqref{h-ode} has a fixed point given by \eqref{h-asympt}. It undergoes a  pitchfork bifurcation at $K=K_c$.
Since $\rho_1$ and $\rho_2$ are positive, the fixed point is stable when $\rho_3>0$ (see below).

\begin{remark}\lbl{remark_rho3}
In many applications, a graphon is of the form $W(x,y) = G(x-y)$ such that $G(x) = G(-x)$.
For instance, the family of small-world graphs is defined by the graphon of this type
\cite{Med14c, ChiMed19a}. In this case,
$W$ admits Fourier series expansion
\begin{eqnarray}
W(x,y) = \sum_{k\in \mathbb{Z}} c_k e^{2\pi \iu k (x-y)},\, c_{k} = c_{-k}\in \mathbb{R},
\quad \sum_{k\in \mathbb{Z}}c_k^2 < \infty.
\end{eqnarray}
Then, eigenvalues of the operator $\bW$ are the coefficients of the expansion. 
The largest eigenvalue is given by
\begin{equation}
\mu_{max} = c_m := \sup \{ c_k\, : \, k\in \mathbb{Z} \},
\end{equation}
and the corresponding eigenfunction is $V_{max} = e^{2\pi \iu m x}$~\cite{ChiMed19b}.
In this case, $\rho_3 = 1$.
For many important graphs such as Erd{\H o}s--R{\' e}nyi and small-world graphs, $\mu_{max} = c_0$
and $V_{max} = \rho_3 = 1$. In this case, the stable branch of equilibria is given through the
values of the order parameter
\be\lbl{h-asympt2}
 |h_\infty (x)| =K_c^{-2} (\mu_{max} \rho_2)^{-1/2}  \sqrt{K-K_c} +
O(K-K_c).
      \ee
\end{remark}

\subsection{Proof of Lemma~\ref{lem.w0w2}}\lbl{sec.proof} 
   By plugging \eqref{An-1} and \eqref{An-2} into \eqref{dw1} and recalling $\bS_0v_0=0$, we
      immediately get
      \be\lbl{dw1-epsilon}
      \partial_t w_1=O(\epsilon^2).
      \ee
      From this, \eqref{An-1} and \eqref{An-2}, we further observe that
      \be\lbl{dwk-epsilon}
      \partial_t w_k = q_k'(w_1) \partial_t w_1 =O(\epsilon^3),\quad k\in\N_1.
      \ee
      From \eqref{An-1} and the equation for   $h$ in \eqref{1-8}, we have 
      \begin{equation*}
        \begin{split}
h(t,x) &= \int_{I}\! W(x,y) (\epsilon c(t) v_0(0,0,y) + O(\epsilon^2)) dy \\
&= \epsilon c(t) K_c \lim_{\lambda \to 0+} D(\lambda ) \int_{I}\! W(x,y)  V_{max}(y) dy + O(\epsilon^2) \\
&= \epsilon c(t) K_c \mu_{max}  \lim_{\lambda \to 0+} D(\lambda ) V_{max}(x) + O(\epsilon^2) \\
&= \epsilon c(t) V_{max}(x)+O(\epsilon^2).
\end{split}
\end{equation*}
This shows \eqref{htx}.

 Next we turn to the equation for $w_2$:
\begin{equation*}
\partial_t w_2 = \partial_{\xi_2} w_2 + 2\partial_\eta w_2
+K(2-e^{-\xi_2}) (h w_1 - \overline{h}w_3).
\end{equation*}
Since $\partial_t w_2$ and $\overline{h}w_3$ are of order $O(\epsilon^3)$, we have
\begin{equation}
\left( \partial_{\xi_2} + 2\partial_\eta\right) w_2 
= -K_c \epsilon^2 c(t)^2 (2-e^{-\xi_2}) V_{max} v_0 + O(\epsilon^3).
\label{w2-eqn}
\end{equation}

\begin{lemma}
 On the center manifold, $w_2$ is expressed as
\begin{eqnarray}
w_2 &=& K_c^2 \epsilon^2 c(t)^2 V_{max}^2  \nonumber \\ 
&\times &\lim_{\lambda \to 0+} 
    \int_{\mathbb{R}}\! \Bigl( \frac{1}{(\lambda -\iu \omega )^2} 
   + \frac{1}{2}\frac{e^{-2\xi_2}}{(\lambda +1 -\iu \omega )^2}+\frac{1}{\lambda -\iu \omega }-\frac{1}{\lambda +1-\iu \omega } \nonumber \\ 
&-& \frac{e^{-\xi_2}}{\lambda -\iu \omega }
 - \frac{2e^{-\xi_2}}{\lambda +1/2 - \iu \omega } + \frac{3e^{-\xi_2}}{\lambda +1 - \iu\omega } \Bigr)
 e^{i\eta \omega }g(\omega )d\omega + O(\epsilon^3) .\qquad
\label{lemma4-3}
\end{eqnarray}
\end{lemma}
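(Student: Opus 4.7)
The plan is to solve the first--order transport PDE \eqref{w2-eqn} on the center manifold by inverting $\bL_2=\partial_{\xi_2}+2\partial_\eta$ via the generalized resolvent framework of Section~\ref{sec.resolvent}. The Ansatz \eqref{An-1}--\eqref{An-2} implies, by the same bookkeeping in $\epsilon$ as in \eqref{dw1-epsilon}--\eqref{dwk-epsilon}, that $\partial_t w_2=O(\epsilon^3)$, so modulo cubic corrections \eqref{w2-eqn} collapses to
\[
\bL_2 w_2 = -K_c\epsilon^2 c(t)^2(2-e^{-\xi_2})V_{max}(x)\,v_0 + O(\epsilon^3).
\]
Since $0\in\partial\sigma(\bL_2)$ by Lemma~\ref{lem.residual}, the equation cannot be solved by the standard resolvent in $\cH^+_0$; instead, I would replace $v_0$ by its $\lambda$--dependent preimage $K_c D(\lambda;\xi_2,\eta)V_{max}(x)$ supplied by \eqref{vc}, apply $(\lambda-\bL_2)^{-1}$ for $\Re\lambda>0$, and take $\lambda\to 0+$ inside the Gelfand triplet $\cH^+_a\subset\cH^+_0\subset\cH^-_a$. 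The analytic continuation $\cD$ of $D$ past the imaginary axis, built in Section~\ref{sec.spectral} under Assumption~\ref{as.decay}, is what legitimates this limit.

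Concretely, I would apply the integral representation \eqref{rewrite-res},
\[
(\lambda-\bL_2)^{-1}[\phi](\xi,\eta,x)=\int_0^\infty e^{-\lambda t}\phi(\xi+t,\eta+2t,x)\,dt\qquad (\Re\lambda>0),
\]
to $\phi(\xi,\eta)=(2-e^{-\xi})D(\lambda;\xi,\eta)$. Expanding $D$ via \eqref{define-D}, substituting $\xi\mapsto\xi+t$, $\eta\mapsto\eta+2t$, and carrying out the $t$--integration term by term produces four rational building blocks of the form $1/[(\lambda+m-i\omega)(\lambda+k-2i\omega)]$ with $m\in\{0,1\}$ and $k\in\{0,1,2\}$. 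A partial--fraction decomposition in these variables, combined with the simplifications that hold in the limit $\lambda\to 0+$, should collect these contributions into the seven terms of the integrand displayed in \eqref{lemma4-3}: in particular, the two double poles $(\lambda-i\omega)^{-2}$ and $(\lambda+1-i\omega)^{-2}$ arise from coincidence of simple--pole factors in the $\xi$--independent and $e^{-2\xi_2}$ pieces, while the isolated pole at $\lambda+\tfrac12-i\omega$ comes from the rewriting $\tfrac{2}{1-2i\omega}=\tfrac{1}{\tfrac12-i\omega}$ after partial--fractioning the $k=1$ blocks at $\lambda=0$.

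The main obstacle is the singular limit $\lambda\to 0+$: naive inversion is impossible in $\cH^+_0$, and one must argue carefully in $\cH^-_a$ that the limit of $(\lambda-\bL_2)^{-1}[(2-e^{-\xi})D(\lambda;\cdot,\cdot)]$ exists under Assumption~\ref{as.decay} and that the auxiliary defect $\lambda u_\lambda$ in the identity $\lambda u_\lambda-\bL_2 u_\lambda=-F_\lambda$ is $o(1)$ in $\cH^-_a$. A secondary, largely notational difficulty is that the coupling $hw_1$ in \eqref{dw2} implicitly evaluates $v_0$ at the $\xi_1$--coordinate conjugate via \eqref{1-6} to the current $\xi_2$; one must verify that the substitution used above is consistent with this identification, which is what produces the combinations $(e^{-\xi_1}+1)^k$ in \eqref{P2} after the inverse change of variables $e^{-\xi_2}=e^{-\xi_1}+1$. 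The remaining partial--fraction bookkeeping, while lengthy, is entirely mechanical once the source and the resolvent action are matched.
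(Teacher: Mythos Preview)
Your approach is correct in spirit but takes a genuinely different route from the paper. The paper's proof is a one-line verification: it simply states that substituting \eqref{lemma4-3} into \eqref{w2-eqn} and differentiating term by term under the integral confirms the identity up to $O(\epsilon^3)$. No resolvent machinery, no Gelfand triplet, no $\lambda\to 0+$ justification is invoked; the limit in \eqref{lemma4-3} is treated as a definition of the candidate, and one checks it satisfies the transport equation directly.

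Your derivation via $(\lambda-\bL_2)^{-1}$ followed by partial fractions is the constructive counterpart: it explains where the seven terms come from rather than pulling the formula from thin air, and it parallels the way \eqref{compute-D} was obtained for $j=1$. The cost is that you must justify the singular limit $\lambda\to 0+$ and the vanishing of the defect $\lambda u_\lambda$ in $\cH^-_a$, which you correctly flag as the main obstacle. The paper sidesteps this entirely by reversing the logic. One small correction to your write-up: when you set $\phi(\xi,\eta)=(2-e^{-\xi})D(\lambda;\xi,\eta)$, the second argument of $D$ must be $\xi_1(\xi_2)$, not $\xi_2$ itself, since $v_0$ lives in the $\xi_1$ coordinate; you acknowledge this later, but the formula as written would give the wrong partial-fraction output. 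With $e^{-\xi_1}=e^{-\xi_2}-1$ inserted from the start, the source becomes $(2-e^{-\xi_2})\bigl(\tfrac{1}{\lambda-i\omega}-\tfrac{e^{-\xi_2}-1}{\lambda+1-i\omega}\bigr)$, and then your $t$-integration and partial-fraction bookkeeping go through to \eqref{lemma4-3}.
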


\begin{proof}
 A straightforward substitution shows that (\ref{lemma4-3}) satisfies (\ref{w2-eqn}) up to $O(\epsilon )^3$.
\end{proof}
By the relation $e^{-\xi_2} = e^{-\xi_1} + 1$ derived from \eqref{1-6}, we obtain \eqref{w2} with \eqref{P2}.

Next, the equation for $w_0$ is given by
\begin{equation*}
\partial_t w_0 = \partial_{\xi_0} w_0 -Ke^{-\xi_0}\left(h w_{-1} - \overline{h}w_1\right).
\end{equation*}

\begin{lemma}
On the center manifold, $w_0$ is expressed as
\begin{eqnarray}
& & w_0 = K_c^2\epsilon^2 |c(t)|^2 \cdot |V_{max}|^2 \nonumber \\
&\times & \lim_{\lambda \to 0+} \int_{\mathbb{R}}\! \Bigl( \frac{e^{-\xi_0}}{\lambda -\iu \omega }
 -\frac{e^{-\xi_0}}{\lambda + \iu \omega } - \frac{e^{-\xi_0}+e^{-2\xi_0}/2}{\lambda +1-\iu\omega } 
 + \frac{e^{-\xi_0} - e^{-2\xi_0}/2}{\lambda +1+\iu \omega } \Bigr) e^{\iu \eta \omega } g(\omega )d\omega \nonumber \\
& &   + O(\epsilon^3).
\end{eqnarray}
\end{lemma}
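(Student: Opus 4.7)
The plan is to follow the same pattern as the preceding lemma on $w_2$. First, I observe that on the center manifold the Ansatz \eqref{An-1}--\eqref{An-2} yields $\partial_t w_0 = O(\epsilon^3)$ by the same argument as in \eqref{dwk-epsilon}. Hence equation \eqref{dw0} reduces to a first-order linear ODE in the variable $\xi_0$,
$$\partial_{\xi_0} w_0 = K\, e^{-\xi_0}\left(h\, w_{-1} - \overline{h}\, w_1\right) + O(\epsilon^3),$$
subject to the boundary condition $w_0(t,\infty,\eta,x) = 0$.

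Next, I will substitute the leading-order expressions for $h$, $w_1$, and $w_{-1}$ into the right-hand side. From \eqref{htx}, $h = \epsilon c(t) V_{max}(x) + O(\epsilon^2)$; from \eqref{An-1} and \eqref{vc}, $w_1 = \epsilon c(t) K_c \lim_{\lambda\to 0+} D(\lambda;\xi_1,\eta) V_{max}(x) + O(\epsilon^2)$; and the reality constraint \eqref{F-const-b} combined with the piecewise change of variables \eqref{1-6} gives $w_{-1}(t,\xi_{-1},\eta,x) = \overline{w_1(t,\xi_{-1},-\eta,x)}$, together with the identifications $e^{-\xi_1} = 1 + e^{-\xi_0}$ and $e^{-\xi_{-1}} = 1 - e^{-\xi_0}$. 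Even symmetry of $g$ makes $D(0;\xi,\eta)$ real since the imaginary part of the integrand in \eqref{define-D} is odd in $\omega$, so the cross terms $h\, w_{-1}$ and $\overline{h}\, w_1$ combine into a single $\epsilon^2 K_c |c(t)|^2 |V_{max}(x)|^2$-multiple of an explicit integral against $g$ whose kernel assembles the four rational contributions with denominators $\lambda \mp i\omega$ and $\lambda + 1 \mp i\omega$.

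The third step is to integrate the ODE from $\xi_0$ to $\infty$, using $w_0(t,\infty,\eta,x) = 0$. The only required antiderivatives are
$$\int_{\xi_0}^{\infty} e^{-\xi_0'}\, d\xi_0' = e^{-\xi_0}, \qquad \int_{\xi_0}^{\infty} e^{-2\xi_0'}\, d\xi_0' = \tfrac{1}{2}\,e^{-2\xi_0}.$$
The coefficients $1 \pm e^{-\xi_0}$ coming from $e^{-\xi_{\pm 1}}$ thus propagate through the integration to yield the four rational terms with numerators $e^{-\xi_0}$ and $e^{-\xi_0} \pm \tfrac{1}{2}e^{-2\xi_0}$ displayed in \eqref{w0}. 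As with the $w_2$ lemma, the resulting formula may alternatively be verified \emph{a posteriori} by direct substitution into the ODE, which reduces the proof to an algebraic identity modulo $O(\epsilon^3)$.

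The main technical obstacle is the careful tracking of the three distinct rescaled variables $\xi_{-1}, \xi_0, \xi_1$ induced by the piecewise change of variables \eqref{1-6}, together with consistent use of the reality constraint \eqref{F-const-b} to re-express $w_{-1}$ as the complex conjugate of $w_1$ at the appropriate arguments. Even symmetry of $g$ plays an essential role: it both makes $D(0;\xi,\eta)$ real and pairs the $1/(\lambda \mp i\omega)$ and $1/(\lambda + 1 \mp i\omega)$ contributions symmetrically, delivering the final four-term integrand of \eqref{w0}.
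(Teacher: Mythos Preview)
Your proposal is correct and follows essentially the same approach as the paper: reduce \eqref{dw0} to a first-order ODE in $\xi_0$ via $\partial_t w_0=O(\epsilon^3)$, substitute the leading-order expressions for $h$, $w_1$, and $w_{-1}=\overline{w_1(t,\xi_{-1},-\eta,x)}$ using the relations $e^{-\xi_{\pm 1}}=1\pm e^{-\xi_0}$, and integrate against the boundary condition $w_0(t,\infty,\eta,x)=0$. One minor remark: your appeal to the even symmetry of $g$ to make $D(0;\xi,\eta)$ real is not actually needed---the paper obtains the $|c(t)|^2|V_{max}|^2$ prefactor directly from $h\,w_{-1}=\epsilon^2 c\bar c\,V_{max}\,\overline{v_0(\xi_{-1},-\eta,x)}$ and $\bar h\,w_1=\epsilon^2 \bar c c\,\overline{V_{max}}\,v_0(\xi_1,\eta,x)$, and the four rational terms with denominators $\lambda\mp i\omega$, $\lambda+1\mp i\omega$ arise from computing $\overline{D(\lambda;\xi_{-1},-\eta)}-D(\lambda;\xi_1,\eta)$ for real $\lambda$ without any symmetry assumption on $g$.
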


\begin{proof}
By using $w_{-1}(t, \xi_{-1}, \eta , x) = \overline{w_1(t, \xi_{-1}, -\eta, x)}$
and $\partial w_0/\partial t = O(\epsilon^3)$, we get
\begin{eqnarray*}
\frac{\partial w_0}{\partial \xi_0} &=& K_c e^{-\xi_0} \cdot \epsilon^2 |c(t)|^2 
   \left(  V_{max} \overline{v_0(\xi_{-1},-\eta, x)} - \overline{V_{max}} v_0(\xi_1, \eta, x) \right) + O(\epsilon^3) \\
&=& K_c^2\epsilon^2 |c(t)|^2\cdot |V_{max}|^2 e^{-\xi_0} \\
&\times & \lim_{\lambda \to 0+} 
\int_{\mathbb{R}}\! \Bigl( \frac{1}{\lambda +\iu \omega }
 -\frac{1}{\lambda -\iu \omega } - \frac{1-e^{-\xi_0}}{\lambda +1+\iu \omega } 
 + \frac{1 + e^{-\xi_0}}{\lambda +1-\iu \omega } \Bigr) e^{\iu \eta \omega } g(\omega )d\omega + O(\epsilon^3),
\end{eqnarray*}
where the relations $e^{-\xi_{\pm 1}} = 1\pm e^{-\xi_0}$ from \eqref{1-6} is used to write the right hand side as a function of $\xi_0$.
Integrating both sides in $\xi_0$ with the boundary condition $w_0(t, \infty, \eta, x) = 0$ verifies the lemma.
\end{proof}

The relation $e^{-\xi_{1}} = 1+ e^{-\xi_0}$ gives \eqref{w0} with \eqref{P0}.

Finally, we prove \eqref{rho2}. Recall
\begin{eqnarray*}
P_1(\lambda ; \xi_1, \eta ) &:=& \int_\R \left(\lambda-\bL_1\right)^{-1} \left[
                                            \left(1-e^{-\xi_1}\right) \left(P_0-P_2\right) e^{\iu \eta\omega}
                                            \right] g(\omega) d\omega
\end{eqnarray*}
By (\ref{L-res}), 
\begin{eqnarray*}
& & \left(\lambda-\bL_1\right)^{-1}
   \left[ \left(1-e^{-\xi_1}\right) \left(P_0-P_2\right) e^{\iu \eta\omega}\right] (\xi_1, \eta, x) \\
&=& \int_{\R^2} \frac{e^{\iu (\xi_1 \zeta + \eta \widetilde{\omega })}}{\lambda -\iu \zeta - \iu \widetilde{\omega }}\,
  \mathcal{F}[(1-e^{-\xi_1}) \left(P_0-P_2 \right) e^{\iu \eta \omega}](\zeta, \widetilde{\omega})d\zeta d\widetilde{\omega }.
\end{eqnarray*}
Here, $\mathcal{F}[(1-e^{-\xi_1}) \left(P_0-P_2 \right) e^{\iu \eta \omega}]$ is the Fourier transform 
with respect to $\xi_1$ and $\eta$ with parameters $\lambda , \omega $.
Thus, 
\begin{eqnarray*}
& & \left(\lambda-\bL_1\right)^{-1}
   \left[ \left(1-e^{-\xi_1}\right) \left(P_0-P_2\right) e^{\iu \eta\omega}\right] (\xi_1, \eta, x) \\
&=& \int_{\R^2} \frac{e^{\iu (\xi_1 \zeta + \eta \widetilde{\omega })}}{\lambda -\iu\zeta - \iu \widetilde{\omega }}\,
  \mathcal{F}[(1-e^{-\xi_1}) \left(P_0-P_2 \right) ](\zeta)\cdot \delta (\widetilde{\omega } - \omega ) d\zeta d\widetilde{\omega } \\
&=& \int_{\R} \frac{e^{\iu (\xi_1 \zeta + \eta \widetilde{\omega })}}{\lambda -\iu \zeta - \iu\omega }\,
  \mathcal{F}[(1-e^{-\xi_1}) \left(P_0-P_2 \right) ](\zeta) d\zeta.
\end{eqnarray*}
Hence, we obtain
\begin{eqnarray*}
 P_1(\lambda ; \xi_1, \eta ) &=& \int_{\R^2} \frac{e^{\iu (\xi_1 \zeta + \eta \omega })}{\lambda -\iu \zeta - \iu \omega }\,
  \mathcal{F}[(1-e^{-\xi_1}) \left(P_0-P_2 \right) ](\zeta ) g(\omega )d\zeta d\omega\\
P_1(\lambda ; 0, 0) &=& \int_{\R^2} \frac{1}{\lambda - \iu \zeta - \iu \omega }\,
  \mathcal{F}[(1-e^{-\xi_1}) \left(P_0-P_2 \right) ](\zeta )g(\omega )d\zeta d\omega,
\end{eqnarray*}
where $\mathcal{F}[(1-e^{-\xi_1}) \left(P_0-P_2 \right) ]$ is the Fourier transform with respect to $\xi_1$.
Here $(1-e^{-\xi_1}) \left(P_0-P_2 \right)$ is a linear combination of $1, e^{-\xi_1}, e^{-2\xi_1}, e^{-3\xi_1}$.
Hence, $\mathcal{F}[(1-e^{-\xi_1}) \left(P_0-P_2 \right) ]$ is a linear combination of
$\delta (\zeta), \delta (\zeta - \iu ), \delta (\zeta - 2\iu ), \delta (\zeta - 3\iu )$.
By integrating with respect to $\zeta$, after  
a straightforward albeit lengthy calculation we obtain
\begin{equation*}
  \begin{split}
P_1(\lambda ;0,0) & = \int_{\R}\! \left( \frac{1}{(\lambda -\iu\omega )(\lambda +\iu\omega )}
   -\frac{1}{(\lambda -\iu\omega )^3} \right) g(\omega )d\omega \\
 &+    \frac{3}{2}\int_{\R}\! \left(  \frac{1}{\lambda -\iu \omega }-\frac{1}{\lambda +\iu \omega }
 \right) g(\omega )d\omega \\
&- \int_{\R}\! \left(
  \frac{12 \iu \omega ^7 + 4\omega ^6+43 \iu\omega ^5+13\omega ^4+58i\omega^3+25\omega ^2+
    21\iu\omega +10}
  {(1+\omega ^2)^3(1+4\omega ^2)} \right) g(\omega )d\omega.
\end{split}
\end{equation*}
Since $g$ is an even function,
\begin{equation*}
  \begin{split}
 P_1(\lambda ;0,0) 
&= \int_{\R}\! \frac{1}{\lambda -\iu \omega } \frac{g(\omega )}{\iu \omega }d\omega
  + \frac{1}{2}\int_{\mathbb{R}}\! \frac{1}{\lambda -\iu \omega } g''(\omega )d\omega  \\
& - \int_{\R}\! \left(
 \frac{4\omega ^6 + 13\omega ^4 + 25\omega ^2 +10}
   {(1+\omega ^2)^3(1+4\omega ^2)} \right) g(\omega )d\omega .
\end{split}
\end{equation*}
Further, Sokhotski--Plemelj formula \cite{Simon-Complex} shows
\begin{equation*}
  \begin{split}
\lim_{\lambda \to 0+} P_1(\lambda ;0,0) 
&= \frac{1}{2}\pi g''(0) + 2 \lim_{\varepsilon \to 0} \int^{\infty}_{\varepsilon }\! \frac{g'(\omega )}{w}d\omega   \\
&  - \int_{\R}\! \left(
 \frac{4\omega ^6 + 13\omega ^4 + 25\omega ^2 +10}
   {(1+\omega ^2)^3(1+4\omega ^2)} \right) g(\omega )d\omega,
\end{split}
\end{equation*}
which is a negative number and $\rho_2$ is positive.

\begin{figure}
  \centering
  \textbf{a} \includegraphics[width=.47\textwidth]{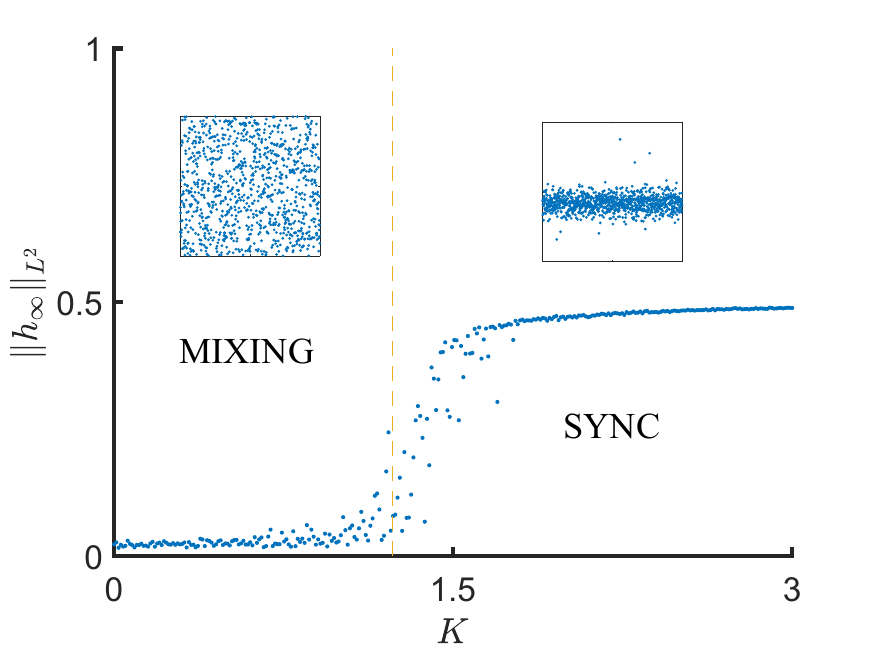}
  \textbf{b} \includegraphics[width=.47\textwidth]{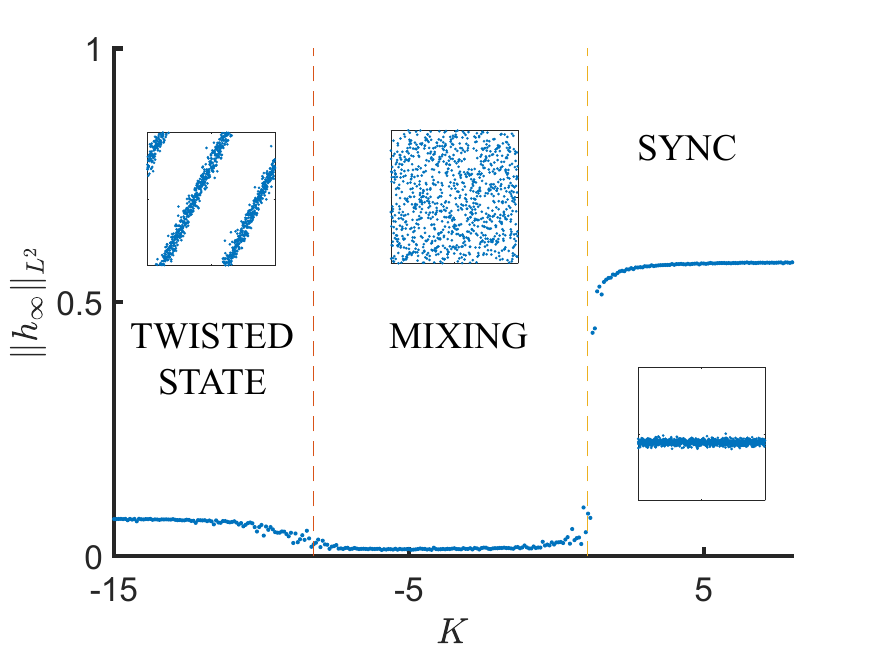}
  \caption{ The bifurcation diagrams for the second order KM on ER  (\textbf{a}) and SW
    (\textbf{b}) random graphs. In these experiments, the intrinsic
    frequences are taken from the normal distributrion with mean $0$ and standard deviation equal
    to $0.3$.
    The parameters used for generating ER and SW graphs are
    $p=0.5$ for the former and $p=0.1$ and $r=0.3$ for the latter.
    The plot in (\textbf{a}) shows a pitchfork bifurcation at $K_c>0$. The bifurcation diagram
    for the small-world family shows two bifurcations at $K_c^+>0$ and $K_c^-<0$. The latter features a spatially
    heterogeneous pattern bifurcating from mixing, because the eigenfunction of $\bW$ corresponding to the smallest
    negative eigenvalue $\mu_{min}$ is not constant (see text for details). The red dashed lines indicate
    pitchfork bifurcations identified analytically. These values are in good agreement with the results of numerical simulations.
   } 
             	\label{f.2}
             \end{figure}


\section{Examples}\lbl{sec.numerics}
\setcounter{equation}{0}
In this section, we apply the theory developed in the previous sections to identify the pitchfork bifurcations
in the KM with inertia on Erd{\H o}s-R{\' e}nyi (ER) and small-world (SW) graphs.  The
eigenvalues and eigenfunctions of the integral operator $\bW$ corresponding to these graphs 
were computed in our previous work \cite{ChiMed19a}. Below, we will use this information without any further
explanations. An interested reader is referred to \cite{ChiMed19a} for more details.


\subsection{ER graphs}
An ER graph  $G(n,p)$ is a graph on $n$ nodes, for which the probability of two given nodes being connected by
an edge is equal to $p\in (0,1)$. The edges between distinct pairs of nodes are assigned independently. $G(n,p)$ is a convergent
sequence of graphs, whose limit is given by a constant function $W\equiv p$.
The largest eigenvalue of $\bW$ is $\mu_{max}=p$, and the corresponding eigenfunction is constant $V_{max}\equiv 1$
(cf. \cite{ChiMed19a}). Thus,  there is a pitchfork bifurcation at 
             $$
             K_c=\frac{1}{pg_0},
             $$
             where $g_0$ is defined in \eqref{g0}.
The bifurcation diagram in Fig.~\ref{f.2}~\textbf{b} shows that in the  vicinity of $K_c$
             the order parameter undergoes a sharp transition from very small positive values to the values close
             to $0.5$. This corresponds to the loss of stability of mixing and the onset of synchronization.
             Since $V_{max}$ is constant, the unstable mode is spatially homogeneous. 
             With minor modifications the analysis of the pitchfork bifurcation can be extended to sparse
             Erd{\H o}s--R{\' e}nyi graphs  \cite{Med19}.
             
             \subsection{SW graphs}
             There are two types of random connections in a SW graph: the short-range and the long-range. The former are
             assigned with probability $1-p$ and the latter are assigned with probability $p\in (0,1/2)$.
             The connectivity is defined by another parameter $r\in (0, 1/2)$ (see \cite{Med14c} for the exact definition the
             SW graph sequence used here). SW graphs in \cite{Med14c} are defined as W-random graphs with the limiting graphon
             $$
             W(x,y)=\left\{\begin{array}{cc}
                             1-p,& d(x,y)<r,\\
                             p,& \;\mbox{otherwise}.
                           \end{array}
                         \right.,
                         $$
                         where $d(x,y)=\min\{|x-y|, 1-|x-y|\}$. The eigenvalues and the corresponding eigenfunctions of
                         $\bW$ for the SW graphs can be found  in \cite{ChiMed19a}. In particular, the largest positive
                         eigenvalue
                         $
                         \mu_{max}=2r+p-4rp
                         $
                         is simple and the corresponding eigenfunction $V_{max}\equiv 1$. This yields a pitchfork bifurcation
                         at $K_c^+=\left(g_0 (2r+p-4rp)\right)^{-1}.$

                         In addition, $\bW$ has negative
                         eigenvalues. As was explained in Remark~\ref{rem.negative}, if the smallest negative eigenvalue
                         $\mu_{min}$ is simple, then there is another bifurcation at $K_c^-=\left( g_0 \mu_{min}\right)^{-1}$.
                         The value of $\mu_{min}$ has the following
                         form
                         $$
                         \mu_{min}=(\pi k^\ast)^{-1} (1-2p)\sin\left(2\pi k^\ast\right)
                         $$
                         for some integer $k^\ast\neq 0$, which is not available analytically in general.
                         The corresponding eigenfunction $V_{min}=e^{2\pi\iu k^\ast x}$
                           is not constant anymore. This implies that in contrast to the bifurcation at $K_c^+$, the steady state
                           bifurcating from mixing at $K^-_c<0$ is heterogeneous (Fig.~\ref{f.2}\textbf{b}).
                          In \cite{ChiMed19b}, a bifurcation for a non-constant eigenfunctions and the corresponding twisted state is studied for the classical KM in detail.

\vskip 0.2cm
\noindent
{\bf Acknowledgements.} The authors thank Matthew Mizuhara for
providing numerical bifurcation diagrams. This work was supported in part by NSF grant DMS 2009233 (to GSM).
        

\def\cprime{$'$} \def\cprime{$'$}
\providecommand{\bysame}{\leavevmode\hbox to3em{\hrulefill}\thinspace}
\providecommand{\MR}{\relax\ifhmode\unskip\space\fi MR }
\providecommand{\MRhref}[2]{%
  \href{http://www.ams.org/mathscinet-getitem?mr=#1}{#2}
}
\providecommand{\href}[2]{#2}

\end{document}